\documentclass[preprint,3p]{elsarticle}

\usepackage{amssymb}
\usepackage{amsmath}
\usepackage{amsthm}
\usepackage{color}
\usepackage{enumitem}
\usepackage{tikz}
\usepackage{mathrsfs}
\usepackage{stmaryrd}
\usepackage{appendix}
\usepackage[colorlinks=true]{hyperref}
\usepackage[hyphenbreaks]{breakurl}

\theoremstyle{plain}
  \newtheorem{thm}{Theorem}[section]
  \newtheorem{lem}[thm]{Lemma}
  \newtheorem{prop}[thm]{Proposition}
  \newtheorem{cor}[thm]{Corollary}
\theoremstyle{definition}
  \newtheorem{defn}[thm]{Definition}
  \newtheorem{exmp}[thm]{Example}
  \newtheorem{rem}[thm]{Remark}

\DeclareMathAlphabet{\mathcal}{OMS}{cmsy}{m}{n}

\DeclareMathOperator{\Int}{Int}
\DeclareMathOperator{\Ext}{Ext}

\makeatletter
\def\ps@pprintTitle{%
 \let\@oddhead\@empty
 \let\@evenhead\@empty
 \def\@oddfoot{\centerline{\thepage}}%
 \let\@evenfoot\@oddfoot}
\makeatother

\def\rto{\mathrel{\relbar\joinrel\joinrel\joinrel\relbar\joinrel\joinrel\mapstochar\joinrel\joinrel\relbar\joinrel\joinrel\joinrel\rightarrow}}

\newcommand{\da}{\downarrow}
\newcommand{\ua}{\uparrow}
\newcommand{\ra}{\rightarrow}

\newcommand{\lda}{\swarrow}
\newcommand{\rda}{\searrow}

\newcommand{\bv}{\bigvee}

\renewcommand{\phi}{\varphi}

\newcommand{\be}{\beta}

\newcommand{\CC}{\mathcal{C}}

\newcommand{\CQ}{\mathcal{Q}}

\newcommand{\sC}{\mathsf{C}}

\newcommand{\sM}{\mathsf{M}}

\newcommand{\sP}{\mathsf{P}}

\newcommand{\sj}{\mathsf{j}}

\newcommand{\Fix}{\mathsf{Fix}}

\newcommand{\bbQ}{\mathbb{Q}}

\newcommand{\Bond}{\mathbf{Bond}}
\newcommand{\Bb}{\mathbf{b}}
\newcommand{\Bc}{\mathbf{c}}
\newcommand{\BB}{\mathbf{B}}

\newcommand{\FB}{\mathfrak{B}}

\newcommand{\SGrp}{\mathbf{SGrp}}

\newcommand{\Quant}{\mathbf{Quant}}

\newcommand{\Rel}{\mathbf{Rel}}

\newcommand{\Sup}{\mathbf{Sup}}

\newcommand{\ResRelMult}{\mathbf{ResRel}_{\otimes}}

\newcommand{\dR}{R^{\da}}
\newcommand{\uR}{R^{\ua}}

\newcommand{\dS}{S^{\da}}
\newcommand{\uS}{S^{\ua}}
\newcommand{\dT}{T^{\da}}

\newcommand{\dphi}{\phi^{\da}}
\newcommand{\uphi}{\phi^{\ua}}

\newcommand{\op}{{\rm op}}

\newcommand{\PX}{\sP X}
\newcommand{\PY}{\sP Y}

\newcommand{\MR}{\sM R}
\newcommand{\MS}{\sM S}

\newcommand{\CX}{\sC X}
\newcommand{\CY}{\sC Y}

\renewcommand{\leq}{\leqslant}
\renewcommand{\geq}{\geqslant}

\renewcommand{\to}{\longrightarrow}

\numberwithin{equation}{section}

\allowdisplaybreaks

\begin{document}

\begin{frontmatter}

\title{Quantales as formal concept lattices}

\author[S]{Lili Shen}
\ead{shenlili@scu.edu.cn}

\author[Z]{Xiaojuan Zhao\corref{cor}}
\ead{xjzhao@swjtu.edu.cn}

\cortext[cor]{Corresponding author.}
\address[S]{School of Mathematics, Sichuan University, Chengdu 610064, China}
\address[Z]{School of Mathematics, Southwest Jiaotong University, Chengdu 611756, China}

\begin{abstract}
We establish a new bridge connecting quantales, semigroups and the theory of formal concept analysis. By introducing residuated relations whose domains are semigroups, we show that every quantale arises as the formal concept lattice induced by such a relation. Furthermore, this construction yields an equivalence between the category of quantales and a category of such residuated relations whose morphisms are multiplicative bonds.
\end{abstract}

\begin{keyword}
Quantale \sep Formal concept analysis \sep Formal concept lattice \sep Residuated relation \sep Quantic nucleus \sep Multiplicative bond

\MSC[2020] 06F07 \sep 20M10 \sep 06A15
\end{keyword}

\end{frontmatter}

\section{Introduction}

\emph{Quantales} \cite{Mulvey1986,Rosenthal1990} are complete lattices
equipped with a multiplication distributing over arbitrary suprema.
They provide a flexible ordered-algebraic framework in which completeness
and associative multiplication interact, and have been studied from a
variety of algebraic, geometric, and topological perspectives \cite{Resende2007,Resende2018,Resende2018a,Weiss2019,Wang2023,Mesablishvili2026}.  \emph{Formal concept analysis} (FCA), on the other hand, starts from a relation between an object set and an attribute set (called a \emph{formal context}), and forms a complete lattice through its Galois closure (called the \emph{formal concept lattice}) \cite{Ganter1999,Davey2002}. The two constructions meet naturally only after one answers a structural question: if the object set is additionally equipped with a semigroup structure, when does the induced multiplication on its subsets descend through the Galois closure?

This paper answers that question at the level of relations. A \emph{residuated relation}
\[
R\colon X\rto Y
\]
with $X$ a semigroup and $Y$ a set allows an attribute of a product to be transported to a residual attribute of either factor (Definition \ref{residuated-relation-def}). We prove that this elementary condition is strong enough to make the closure
\[
\sj_R=\dR\uR\colon\PX\longrightarrow\PX
\]
a quantic nucleus. Consequently its fixed points, equivalently the extents of
the context $R$, form a quantale. We then prove that every quantale arises
in this way, and characterize the residuated relations presenting a given quantale $Q$ (Theorem \ref{sup-inf-dense-criterion}). Thus quantales admit relation-level presentations more flexible
than the canonical order presentation: the object set carries a semigroup
structure, while residual attributes record how its multiplication interacts
with the incidence relation.

The object-level construction has a natural morphism-level lift. Ordinary \emph{bonds}
\cite{Ganter2007} between formal contexts form a category $\Bond$ equivalent to
the category $\Sup$ of complete lattices and sup-preserving maps \cite{Mori2008}; in the quantaloid setting this is the \emph{back diagonal}
construction of \cite{Shen2016a}. We show that, for residuated relations, the condition
\[
\Bb[xx']=\Bb[x]\otimes_S\Bb[x']
\]
selects exactly those bonds whose induced maps preserve the descended multiplication. The resulting category
\[
\ResRelMult
\]
of residuated relations and multiplicative bonds is equivalent to $\Quant$ (Theorem \ref{main-equivalence-theorem}).

The general Galois and monoidal frameworks used here are already established:
Resende's theory of sup-lattice $2$-forms encompasses the Galois quotient
associated with a formal context \cite{Resende2004}, while Mori established the
lattice tensor on bonds and the resulting strong monoidal equivalence with
$\Sup$; see \cite{Mori2008} and \cite[Theorem~27]{Tull2020}. The contribution
of the present paper is the relation-level algebra added to these frameworks:
the residual condition on a semigroup domain forces the Galois closure to be a
quantic nucleus, and the multiplicative row condition characterizes exactly
the bonds inducing quantale homomorphisms.

Moreover, Section~\ref{section-semigroup-objects} gives a categorical interpretation
of the main theorem in terms of semigroup objects in $\Bond$, while Section~\ref{section-pseudogroup-completions}
gives a focused application: the enveloping quantal frame constructed in
\cite{Resende2007} from an abstract complete pseudogroup is obtained as a
concept quantale.

\section{Quantales} 
\label{Quantales}

A \emph{quantale} \cite{Rosenthal1990} is a complete lattice $Q$ equipped with an associative binary operation
$*\colon Q\times Q\to Q$ such that the multiplication preserves arbitrary suprema in each variable:
\begin{equation} \label{quantale-multiplication-preserves-suprema}
\Big(\bv\limits_{i\in I}p_i\Big)*q=\bv\limits_{i\in I}(p_i*q)
\quad\text{and}\quad
p*\Big(\bv\limits_{i\in I}q_i\Big)=\bv\limits_{i\in I}(p*q_i)
\end{equation}
for all $p,q\in Q$ and all families $\{p_i\}_{i\in I}$ and $\{q_i\}_{i\in I}$
in $Q$. A \emph{homomorphism} of quantales is a map
$f\colon P\to Q$ which preserves arbitrary suprema and multiplication:
\[
f\Big(\bv\limits_{i\in I}p_i\Big)=\bv\limits_{i\in I}f(p_i),
\qquad
f(p*p')=f(p)*f(p')
\]
for all $p,p'\in P$ and all families $\{p_i\}_{i\in I}$ in $P$. We denote by 
\[
\Quant
\] 
the category of quantales and their homomorphisms.

\begin{rem}
Unless otherwise stated, quantales in this paper are not assumed to be unital. The unital version is obtained by requiring quantales to have a multiplicative unit and homomorphisms to preserve it.
\end{rem}

Since the multiplication in a quantale preserves arbitrary suprema in each variable, for any $p,q\in Q$ the maps
\[
-*q\colon Q\to Q\quad\text{and}\quad p*-\colon Q\to Q
\]
both admit right adjoints. We write these right adjoints as
\[
-\lda q\colon Q\to Q
\quad\text{and}\quad
p\rda -\colon Q\to Q,
\]
respectively. Thus,
\begin{equation} \label{quantale-lda-rda}
p*q\leq r\iff p\leq r\lda q\iff q\leq p\rda r
\end{equation}
for all $p,q,r\in Q$. Explicitly,
\[
r\lda q=\bv\{p\in Q\mid p*q\leq r\},
\qquad
p\rda r=\bv\{q\in Q\mid p*q\leq r\}.
\]
These residual operations will be used later to construct canonical examples of residuated relations.

We write the product of a \emph{semigroup} \cite{Clifford1961,Howie1995} $X$ by juxtaposition: thus the product of
$x,x'\in X$ is $xx'$.  The symbol $*$ is reserved for the
multiplication of a quantale.  Every semigroup $X$ gives rise to a quantale
$\PX$, whose underlying complete lattice is the powerset of $X$ and whose
multiplication is defined by
\[
A*B=\{ab\mid a\in A,\ b\in B\}
\]
for all $A,B\subseteq X$.  Thus $(\PX,*)$ is the \emph{free quantale}
generated by $X$:

\begin{prop} \label{PX-free-quantale}
The assignment $X\mapsto(\PX,*)$ defines a left adjoint to the forgetful functor 
\[
\Quant\to\SGrp,
\] 
where $\SGrp$ is the category of semigroups and their homomorphisms.
\end{prop}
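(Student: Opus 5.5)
I will prove the statement by exhibiting a universal arrow: for each semigroup $X$, the singleton map $\eta_X\colon X\to\PX$, $x\mapsto\{x\}$, together with the claim that every semigroup homomorphism into a quantale factors uniquely through it. First I check that $(\PX,*)$ is a quantale. Associativity of $*$ on subsets follows directly from associativity in $X$: both $(A*B)*C$ and $A*(B*C)$ equal $\{abc\mid a\in A,\ b\in B,\ c\in C\}$. Suprema in $\PX$ are unions, and
\[
\Big(\bigcup_{i\in I}A_i\Big)*B=\bigcup_{i\in I}(A_i*B)
\]
holds elementwise, as does the symmetric law, so \eqref{quantale-multiplication-preserves-suprema} is satisfied. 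The empty family needs no separate treatment, since $\emptyset*B=\emptyset$. Next, $\eta_X$ is a semigroup homomorphism from $X$ into the underlying semigroup of $\PX$, because $\{x\}*\{x'\}=\{xx'\}$.

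For the universal property, let $(Q,*)$ be a quantale and $f\colon X\to Q$ a semigroup homomorphism. I define
\[
\bar f\colon\PX\to Q,\qquad \bar f(A)=\bv_{a\in A}f(a).
\]
This map clearly preserves arbitrary unions, hence all suprema, including $\bar f(\emptyset)=\bot$. Multiplicativity follows by using \eqref{quantale-multiplication-preserves-suprema} in each variable:
\[
\bar f(A)*\bar f(B)=\bv_{a\in A,\,b\in B}f(a)*f(b)=\bv_{a\in A,\,b\in B}f(ab)=\bar f(A*B),
\]
where the last step uses that $A*B$ is exactly the set of products $ab$. Moreover $\bar f\circ\eta_X=f$. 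Uniqueness holds because any sup-preserving $g$ with $g(\{x\})=f(x)$ must satisfy $g(A)=g\big(\bigcup_{a\in A}\{a\}\big)=\bv_{a\in A}f(a)$.

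By the standard characterisation of adjunctions via universal arrows, $X\mapsto\PX$ then extends uniquely to a functor left adjoint to the forgetful functor $\Quant\to\SGrp$. On a homomorphism $h\colon X\to X'$ this functor acts by direct image, $A\mapsto h[A]$, which is the extension $\overline{\eta_{X'}\circ h}$. The argument contains no genuine obstacle. The only point requiring care is the nonunital setting: the empty supremum must be handled correctly, and no unit preservation is needed. Both are consistent with the conventions stated earlier, since the preceding remark fixes quantales as not necessarily unital.
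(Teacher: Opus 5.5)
Your proof is correct and is essentially the argument the paper relies on: the paper gives no proof of its own but points to Rosenthal's free-quantale adjunction with units omitted, which is this same universal-arrow construction via singletons $x\mapsto\{x\}$ and the sup-extension $A\mapsto\bigvee_{a\in A}f(a)$. Your explicit check that the empty supremum (bottom) is preserved and that no unit is required is exactly the non-unital adaptation the paper alludes to, and you handle it correctly.
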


This is the non-unital analogue of Rosenthal's free-quantale adjunction
\cite[Proposition~2.3.2]{Rosenthal1990}; the same proof applies with units
omitted.



A \emph{quantic nucleus} \cite{Rosenthal1990} on a quantale $Q$ is a map $\sj\colon Q\to Q$ such that
\begin{enumerate}[label=(N\arabic*)]
\item \label{nucleus:m} $x\leq y\implies\sj(x)\leq\sj(y)$,
\item \label{nucleus:c} $x\leq\sj(x)$,
\item \label{nucleus:i} $\sj\sj(x)=\sj(x)$,
\item \label{nucleus:*} $\sj(x)*\sj(y)\leq\sj(x*y)$
\end{enumerate}
for all $x,y\in Q$. Thus a quantic nucleus is a closure operator compatible with the quantale multiplication. As an immediate consequence of \ref{nucleus:m}--\ref{nucleus:*}, we have
\[
\sj(x*y)\leq\sj(\sj(x)*\sj(y))\leq\sj\sj(x*y)=\sj(x*y),
\]
and consequently
\begin{equation} \label{j-jx-jy}
\sj(x*y)=\sj(\sj(x)*\sj(y))
\end{equation}
for all $x,y\in Q$.

Each quantic nucleus $\sj$ on $Q$ induces a quotient quantale, denoted by $Q_{\sj}$. Its underlying set is the set of fixed points
\[
Q_{\sj}=\{x\in Q\mid \sj(x)=x\}.
\]
For $x,y,x_i\in Q_{\sj}$ $(i\in I)$, the multiplication and suprema in $Q_{\sj}$ are given by
\begin{equation} \label{Qj-multiplication-sup}
x*_{\sj}y=\sj(x*y),
\qquad
\bigsqcup\limits_{i\in I}x_i=\sj\Big(\bv\limits_{i\in I}x_i\Big),
\end{equation}
where $\bigsqcup$ denotes the supremum in $Q_{\sj}$ and $\bv$ denotes the supremum in $Q$. Moreover, $\sj\colon Q\to Q_{\sj}$ is a homomorphism of quantales.

The following standard representation theorem says that every quantale is a quotient of a free quantale by a quantic nucleus.

\begin{thm} \label{Q-iso-PXj}
(See \cite[Theorem 3.1.2]{Rosenthal1990}.)
For every quantale $Q$, there exist a semigroup $X$ and a quantic nucleus $\sj$ on $\PX$ such that
\[
Q\cong(\PX)_{\sj}
\]
as quantales.
\end{thm}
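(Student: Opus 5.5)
Take $X$ to be the underlying semigroup of $Q$ itself, namely $X=(Q,*)$; this is a semigroup because $*$ is associative. By Proposition \ref{PX-free-quantale}, the identity map $X\to Q$, viewed as a semigroup homomorphism, extends to a unique quantale homomorphism
\[
f\colon\PX\to Q,\qquad f(A)=\bv A .
\]
It preserves suprema because $\bv\bigcup_i A_i=\bv_i\bv A_i$. It preserves multiplication because
\[
\bv(A*B)=\bv\{a*b\mid a\in A,\ b\in B\}=\Big(\bv A\Big)*\Big(\bv B\Big),
\]
which follows from \eqref{quantale-multiplication-preserves-suprema} applied in each variable. Moreover $f$ is surjective, since $f(\{q\})=q$.

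Because $f$ preserves suprema, it has a right adjoint $g\colon Q\to\PX$ with $g(q)={\da}q=\{x\in Q\mid x\leq q\}$. Define $\sj=gf$, so that $\sj(A)={\da}\bv A$. We check the four axioms in turn.

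\begin{itemize}
\item[] \textbf{(N1)} $\sj$ is monotone, being a composite of monotone maps.
\item[] \textbf{(N2)} $A\subseteq{\da}\bv A$ holds trivially.
\item[] \textbf{(N3)} We have $\bv{\da}\bv A=\bv A$, so $\sj$ is idempotent.
\item[] \textbf{(N4)} Take $a\leq\bv A$ and $b\leq\bv B$. By monotonicity of $*$, which follows from sup-preservation, $a*b\leq(\bv A)*(\bv B)=\bv(A*B)$. Hence $\sj(A)*\sj(B)\subseteq\sj(A*B)$.
\end{itemize}

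The fixed points of $\sj$ are exactly the principal down-sets ${\da}q$ for $q\in Q$; note that the empty set is not a fixed point unless it is recast as ${\da}\bot=\{\bot\}$. We then show that $\phi\colon Q\to(\PX)_{\sj}$, $q\mapsto{\da}q$, is an isomorphism of quantales. It is a bijection, with inverse $\bv$ restricted to the fixed points. Using \eqref{Qj-multiplication-sup}, multiplication is preserved:
\[
{\da}p*_{\sj}{\da}q=\sj({\da}p*{\da}q)={\da}\bv\{a*b\mid a\leq p,\ b\leq q\}={\da}(p*q),
\]
where the supremum equals $p*q$ because it is attained at $a=p$, $b=q$ and every term lies below $p*q$. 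Suprema are preserved as well:
\[
\bigsqcup_i{\da}q_i=\sj\Big(\bigcup_i{\da}q_i\Big)={\da}\bv_i q_i .
\]

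No step is a real obstacle. The only points needing care are that no unit is required, since the construction is non-unital throughout, and the treatment of the empty family. For the latter, $\bv\emptyset=\bot$ gives $\sj(\emptyset)={\da}\bot$, which is consistent with the formulas above.
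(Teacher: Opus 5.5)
Your proof is correct and follows the standard argument behind the cited result. You take the canonical surjective homomorphism $\bv\colon\PX\to Q$ on the underlying semigroup $X=(Q,*)$, compose it with its right adjoint $q\mapsto{\da}q$, and obtain the nucleus $\sj(A)={\da}\bv A$; this is the Galois closure $\dR\uR$ of the order context $R={\leq}\colon Q\rto Q$ in the final assertion of Theorem~\ref{sup-inf-dense-criterion}, and your $\phi$ is the inverse of the paper's $\Phi|_{\MR}$.
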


\begin{rem}
The preceding theorem will be refined in this paper. We shall show that the nuclei relevant for representing quantales can be obtained from Galois closures induced by residuated relations. More precisely, a residuated relation $R\colon X\rto Y$ will induce the
quantic nucleus
\[
\sj_R:=\dR\uR\colon \PX\to\PX,
\]
and the corresponding quotient quantale will be precisely the formal concept lattice of $R$.
\end{rem}

\section{Formal concept lattices}

Recall that a \emph{relation} $R\colon X\rto Y$ between sets is a subset $R\subseteq X\times Y$, where
$(x,y)\in R$ is also written as $xRy$. Each relation $R\colon X\rto Y$ induces two maps
\[
\uR\colon\PX\to\PY
\quad\text{and}\quad
\dR\colon\PY\to\PX
\]
between the powersets of $X$ and $Y$, defined by
\begin{equation} \label{uR-dR-def}
\uR(A)=\{y\in Y\mid \forall a\in A,\ aRy\},
\qquad
\dR(B)=\{x\in X\mid \forall b\in B,\ xRb\}.
\end{equation}
Then $\uR$ and $\dR$ form a Galois connection between $\PX$ and $(\PY)^{\op}$:
\begin{equation} \label{uR-dv-dR}
B\subseteq \uR(A)\iff A\subseteq\dR(B)
\end{equation}
for all $A\subseteq X$ and $B\subseteq Y$. Consequently,
\begin{itemize}
\item both $\uR$ and $\dR$ send unions into intersections, and
\item both $\dR\uR\colon\PX\to\PX$ and $\uR\dR\colon\PY\to\PY$ are closure operators.
\end{itemize}

\begin{rem}
The Galois connection above is standard in formal concept analysis. It also
admits a formulation in the theory of sup-lattice $2$-forms: the relation
$R$ determines a $2$-form on the powersets of $X$ and $Y$, and the associated
orthogonality construction recovers the same Galois correspondence and concept
lattice; see \cite{Resende2004}. We use the ordinary FCA notation throughout.
\end{rem}

\begin{rem}
More generally, every distributor between enriched categories induces an
\emph{Isbell adjunction}. If sets $X$ and $Y$ are regarded as discrete
categories enriched in the two-element Boolean algebra
$\mathbf{2}=\{0,1\}$, then $R$ is a $\mathbf{2}$-distributor between them,
and the Isbell adjunction induced by $R$ specializes to the Galois connection
$\uR\dashv\dR$ between $\PX$ and $(\PY)^{\op}$; see
\cite{Lawvere1986,Day2007,Kelly2005,Stubbe2005,Shen2013a,Shen2014,Lai2017}.
\end{rem}

For $x\in X$ and $y\in Y$, write
\[
R[x]=\{y'\in Y\mid xRy'\}
\qquad\text{and}\qquad
R^{-1}[y]=\{x'\in X\mid x'Ry\}
\]
for the row of $x$ and the column of $y$, respectively. Thus
\[
\uR(\{x\})=R[x],
\qquad
\dR(\{y\})=R^{-1}[y].
\]
More generally,
\[
\uR(A)=\bigcap_{x\in A}R[x],
\qquad
\dR(B)=\bigcap_{y\in B}R^{-1}[y]
\]
for all $A\subseteq X$ and $B\subseteq Y$.

\begin{lem} \label{dRuR-fix}
Let $R\colon X\rto Y$ be a relation between sets. For each $A\subseteq X$ and $x\in X$, the following statements are equivalent:
\begin{enumerate}[label={\rm(\roman*)}]
\item $x\in\dR\uR(A)$.
\item For each $y\in Y$, if $aRy$ for all $a\in A$, then $xRy$.
\end{enumerate}
\end{lem}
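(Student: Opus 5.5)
The plan is to unfold the two definitions in \eqref{uR-dR-def} and compare them directly; no earlier result beyond the definitions is needed.

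Set $B:=\uR(A)$. By definition,
\[
B=\{y\in Y\mid \forall a\in A,\ aRy\}.
\]
Applying the second formula of \eqref{uR-dR-def} to this $B$ gives
\[
\dR\uR(A)=\dR(B)=\{x\in X\mid \forall y\in B,\ xRy\}.
\]
So (i) says exactly that $xRy$ for every $y\in B$.

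Now rewrite the condition $y\in B$ as "$aRy$ for all $a\in A$". The statement "for every $y\in B$, $xRy$" then reads: for each $y\in Y$, if $aRy$ for all $a\in A$, then $xRy$. This is literally (ii). Both directions of the equivalence follow at once, since each step above is a rewriting of a definition.

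There is essentially no obstacle. The only point to watch is the edge case where $B$ is empty, for instance when $A=\varnothing$ gives $B=Y$, or when no $y$ is related to all of $A$. In that case both (i) and (ii) are handled uniformly, because a universally quantified statement over an empty set is vacuously true, and the unfolding above treats this automatically.

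Alternatively, the equivalence can be seen from the Galois connection \eqref{uR-dv-dR}. Taking the singleton $\{x\}$, we have $\{x\}\subseteq\dR(B)$ if and only if $B\subseteq\uR(\{x\})=R[x]$, and $B\subseteq R[x]$ is exactly condition (ii).
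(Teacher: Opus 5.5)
Your proof is correct and matches the paper's argument: it unfolds $\dR\uR(A)$ as the set of $x$ with $xRy$ for all $y\in\uR(A)$, then rewrites $y\in\uR(A)$ as ``$aRy$ for all $a\in A$''. One small slip in your edge-case remark, harmless to the proof: $A=\varnothing$ gives $\uR(A)=Y$, which is not an instance of $\uR(A)$ being empty (the quantifier over $A$ is what becomes vacuous there).
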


\begin{proof}
By definition, $x\in\dR\uR(A)$ if and only if $xRy$ for every $y\in\uR(A)$. Since $y\in\uR(A)$ precisely means that $aRy$ for all $a\in A$, the assertion follows.
\end{proof}

In the theory of \emph{formal concept analysis} \cite{Ganter1999,Davey2002}, a \emph{formal context} (or \emph{context} for short) is a triple $(X,Y,R)$, where $X$ is the set of \emph{objects}, $Y$ is the set of \emph{attributes}, and $R\colon X\rto Y$ is a relation. The statement $xRy$ is read as ``the object $x$ has the attribute $y$''.

A \emph{formal concept} of a formal context $(X,Y,R)$ is a pair $(A,B)$, where $A\subseteq X$ and $B\subseteq Y$, such that
\[
A=\dR(B)
\quad\text{and}\quad
B=\uR(A).
\]
In this case, $A$ is called the \emph{extent} of the concept and $B$ is called its \emph{intent}. The set of all formal concepts is denoted by $\FB(X,Y,R)$ (or $\FB R$ for short), which is ordered by
\[
(A,B)\leq(A',B')
\iff
A\subseteq A'
\iff
B\supseteq B'.
\]
With this order, $\FB R$ is a complete lattice, called the \emph{formal concept lattice} (or \emph{concept lattice} for short) of $R$.

A subset $A\subseteq X$ is called an \emph{$R$-extent} if $A=\dR\uR(A)$, and a subset $B\subseteq Y$ is called an \emph{$R$-intent} if $B=\uR\dR(B)$. We write
\[
\begin{aligned}
&\Ext(R)=\Fix(\dR\uR)=\{A\subseteq X\mid A=\dR\uR(A)\}\quad\text{and}\\
&\Int(R)=\Fix(\uR\dR)=\{B\subseteq Y\mid B=\uR\dR(B)\}.
\end{aligned}  
\]
The assignments
\[
(A,B)\mapsto A
\quad\text{and}\quad
(A,B)\mapsto B
\]
identify $\FB R$ with $\Ext(R)$ and with $\Int(R)^{\op}$, respectively. In what follows, we shall mostly use the extent representation and write
\[
\MR=\Ext(R)=\Fix(\dR\uR).
\]
Thus $\MR$ is a complete lattice whose order is inclusion. Its infima are computed as intersections:
\[
\bigsqcap_{i\in I}A_i=\bigcap_{i\in I}A_i,
\]
and its suprema are computed by closing unions:
\[
\bigsqcup_{i\in I}A_i=\dR\uR\Big(\bigcup_{i\in I}A_i\Big),
\]
where $A_i\in\MR$ for all $i\in I$. Similarly, $\Int(R)$, ordered by inclusion, is a complete lattice: its infima are intersections, and its suprema are obtained by closing unions under $\uR\dR$. Moreover, the restrictions 
\begin{equation} \label{uR-dR-MR-IntR-iso}
\left.\uR\right|_{\MR}\colon\MR\to\Int(R)^{\op}\quad\text{and}\quad\left.\dR\right|_{\Int(R)}\colon\Int(R)^{\op}\to\MR
\end{equation}
are mutually inverse order isomorphisms.

We shall also need the standard notion of a bond between formal contexts \cite{Ganter2007}. Let
\[
R\colon X\rto Y
\quad\text{and}\quad
S\colon Z\rto W
\]
be two relations. A \emph{bond} from $R$ to $S$ is a relation $\Bb\colon X\rto W$ such that
\begin{enumerate}[label={\rm(B\arabic*)}]
\item \label{bond-def:row} for every $x\in X$, the row $\Bb[x]=\{w\in W\mid x\Bb w\}$ is an $S$-intent;
\item \label{bond-def:column} for every $w\in W$, the column $\Bb^{-1}[w]=\{x\in X\mid x\Bb w\}$ is an $R$-extent.
\end{enumerate}
Here rows are subsets of the attribute set $W$ of the second context, while columns are subsets of the object set $X$ of the first context.

Every bond $\Bb\colon R\to S$ induces a map
\begin{equation} \label{bond-induced-map}
\Bb_*\colon \MR\to\MS,\quad \Bb_*(A)=\dS\Bb^\ua(A),
\end{equation}
where $\Bb^\ua$ is defined by \eqref{uR-dR-def}, i.e.,
\[
\Bb^\ua(A)=\{w\in W\mid \forall a\in A,\ a\Bb w\}.
\]
Since $\Bb^\ua(A)\subseteq W$, the set $\dS\Bb^\ua(A)$ is an $S$-extent, and therefore $\Bb_*(A)\in\MS$.

The following standard fact is the well-known correspondence between bonds of formal contexts and sup-preserving maps between their concept lattices; see, for example, \cite{Ganter2007,Mori2008}. It is also the relational instance of the quantaloid-theoretic bond construction in \cite{Shen2016a}. We recall its elementary verification in the present notation.

\begin{prop} \label{bond-map-sup}
For every bond $\Bb\colon R\to S$, the induced map $\Bb_*\colon\MR\to\MS$ preserves arbitrary suprema.
\end{prop}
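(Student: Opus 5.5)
We must show that for any family $\{A_i\}_{i\in I}$ in $\MR$,
\[
\Bb_*\Big(\bigsqcup_{i\in I}A_i\Big)=\bigsqcup_{i\in I}\Bb_*(A_i),
\]
where the left supremum is $\dR\uR(\bigcup_i A_i)$ in $\MR$ and the right one is $\dS\uS(\bigcup_i \Bb_*(A_i))$ in $\MS$. The plan is to compute both sides at the level of attribute sets in $W$ and show they are the extents of the same intent, namely $\bigcap_i \Bb^\ua(A_i)=\Bb^\ua(\bigcup_i A_i)$. The last equality holds because $\Bb^\ua$ sends unions to intersections, as for any relation.

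\textbf{Step 1 (key lemma).} For every $A\subseteq X$, $\Bb^\ua(\dR\uR(A))=\Bb^\ua(A)$. The inclusion $\subseteq$ is antitonicity, since $A\subseteq\dR\uR(A)$. For $\supseteq$, let $w\in\Bb^\ua(A)$. Then $A\subseteq\Bb^{-1}[w]$. By (B2), $\Bb^{-1}[w]$ is an $R$-extent, so $\dR\uR(A)\subseteq\dR\uR(\Bb^{-1}[w])=\Bb^{-1}[w]$, which gives $w\in\Bb^\ua(\dR\uR(A))$. This is the only place (B2) is used, and I expect it to be the main point.

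\textbf{Step 2 (intents).} For every $A\subseteq X$, $\Bb^\ua(A)=\bigcap_{a\in A}\Bb[a]$ is an intersection of $S$-intents by (B1). Since $\Int(R)$-style fixed-point sets of the closure $\uS\dS$ are closed under intersections, $\Bb^\ua(A)$ is an $S$-intent. Consequently $\uS\Bb_*(A)=\uS\dS\Bb^\ua(A)=\Bb^\ua(A)$.

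\textbf{Step 3 (assembly).} Using $\uS$ turning unions into intersections together with Step 2,
\[
\bigsqcup_i\Bb_*(A_i)=\dS\Big(\bigcap_i\uS\Bb_*(A_i)\Big)=\dS\Big(\bigcap_i\Bb^\ua(A_i)\Big)=\dS\Bb^\ua\Big(\bigcup_iA_i\Big).
\]
By Step 1 this equals $\dS\Bb^\ua(\dR\uR(\bigcup_iA_i))=\Bb_*(\bigsqcup_iA_i)$. The empty family is covered by the same computation, since the bottom of each lattice is the closure of $\emptyset$. Everything beyond Step 1 is routine Galois-connection bookkeeping.
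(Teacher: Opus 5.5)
Your proof is correct and follows essentially the same route as the paper. Both arguments use the extent columns to show $\Bb^\ua\dR\uR(A)=\Bb^\ua(A)$, use the intent rows to show each $\Bb^\ua(A_i)$ is an $S$-intent, and then identify $\dS\bigl(\bigcap_i\Bb^\ua(A_i)\bigr)$ with the supremum in $\MS$. The only difference is cosmetic: you verify that last identification directly via $\uS\Bb_*(A_i)=\Bb^\ua(A_i)$, whereas the paper cites the order isomorphism between extents and intents.
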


\begin{proof}
Let $A_i\in\MR$ for $i\in I$. Since $\Bb^\ua$ sends unions to intersections, we have
\[
\Bb^\ua\Big(\bigcup_{i\in I}A_i\Big)=\bigcap_{i\in I}\Bb^\ua(A_i).
\]
Moreover, the columns of $\Bb$ are $R$-extents, hence replacing a subset $A\subseteq X$ by its closure $\dR\uR(A)$ does not change $\Bb^\ua(A)$. Indeed, for every $w\in W$,
\[
w\in\Bb^\ua(A)
\iff
A\subseteq \Bb^{-1}[w],
\]
and $\Bb^{-1}[w]$ is an $R$-extent.

Therefore,
\[
\Bb_*\Big(\bigsqcup_{i\in I}A_i\Big)=
\dS\Bb^\ua\dR\uR\Bigl(\bigcup_{i\in I}A_i\Bigr)=
\dS\Bb^\ua\Bigl(\bigcup_{i\in I}A_i\Bigr)=
\dS\Big(\bigcap_{i\in I}\Bb^\ua(A_i)\Big).
\]
Finally, each $\Bb^\ua(A_i)$ is an $S$-intent because $\Bb$ has $S$-intent rows and intents are closed under arbitrary intersections. Hence $\dS$ sends the above intersection of intents to the supremum of the corresponding $S$-extents in $\sM S$ (see \eqref{uR-dR-MR-IntR-iso}), giving
\[
\dS\Big(\bigcap_{i\in I}\Bb^\ua(A_i)\Big)
=
\bigsqcup_{i\in I}\dS\Bb^\ua(A_i)
=
\bigsqcup_{i\in I}\Bb_*(A_i).
\]
Thus $\Bb_*$ preserves arbitrary suprema.
\end{proof}

Conversely, every sup-preserving map between concept lattices is induced by a unique bond. More precisely, let $h\colon\MR\to\MS$ be a sup-preserving map. Define a relation $\Bb_h\colon X\rto W$ by (cf. \eqref{uR-dv-dR})
\begin{equation} \label{bond-from-join-map}
x\Bb_h w
\iff
w\in\uS h\dR\uR(\{x\})
\iff
h\dR\uR(\{x\})\subseteq \dS(\{w\}).
\end{equation}

\begin{prop} \label{sup-map-bond}
Let $h\colon\MR\to\MS$ be a sup-preserving map. Then $\Bb_h$ is a bond from $R$ to $S$, and $(\Bb_h)_*=h$.
\end{prop}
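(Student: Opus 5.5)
We verify the two bond axioms directly from the definition \eqref{bond-from-join-map} and then compute $(\Bb_h)_*$. Throughout, write $\gamma_x=\dR\uR(\{x\})\in\MR$ for the object concept of $x$. Every $A\in\MR$ is the supremum in $\MR$ of the $\gamma_x$ with $x\in A$, since $\dR\uR(\bigcup_{x\in A}\{x\})=A$.

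\textbf{Rows.} By definition, $\Bb_h[x]=\uS h(\gamma_x)$. Since $h(\gamma_x)\in\MS$ and $\uS$ maps $S$-extents to $S$-intents by \eqref{uR-dR-MR-IntR-iso}, each row is an $S$-intent, so \ref{bond-def:row} holds.

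\textbf{Columns.} We aim to show that $\Bb_h^{-1}[w]$ is the $R$-extent $\rho_h(\dS(\{w\}))$, where $\rho_h$ is the right adjoint of $h$. Since $h$ preserves arbitrary suprema between complete lattices, it has such a right adjoint $\rho_h\colon\MS\to\MR$, given by $\rho_h(C)=\bigsqcup\{A\in\MR\mid h(A)\subseteq C\}$. The extent $\dS(\{w\})$ lies in $\MS$. Then
\[
x\Bb_h w\iff h(\gamma_x)\subseteq\dS(\{w\})\iff \gamma_x\subseteq\rho_h(\dS(\{w\}))\iff x\in\rho_h(\dS(\{w\})).
\]
The last equivalence holds because $\rho_h(\dS(\{w\}))$ is an $R$-extent, so it contains $\gamma_x$ iff it contains $x$. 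This gives \ref{bond-def:column}.

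\textbf{Computing $(\Bb_h)_*$.} For $A\in\MR$,
\[
(\Bb_h)^\ua(A)=\bigcap_{x\in A}\Bb_h[x]=\bigcap_{x\in A}\uS h(\gamma_x)=\uS\Big(\bigcup_{x\in A}h(\gamma_x)\Big),
\]
because $\uS$ turns unions into intersections. Applying $\dS$ gives
\[
(\Bb_h)_*(A)=\dS\uS\Big(\bigcup_{x\in A}h(\gamma_x)\Big)=\bigsqcup_{x\in A}h(\gamma_x),
\]
the supremum in $\MS$. Since $h$ preserves suprema, this equals $h\big(\bigsqcup_{x\in A}\gamma_x\big)=h(A)$. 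The empty case is covered by the same computation: both sides equal the bottom element $\dS\uS(\emptyset)$.

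The only mildly delicate step is the column condition, which requires the right adjoint of $h$. If one prefers to avoid it, the column can be shown to be an extent directly via Lemma \ref{dRuR-fix}. Suppose $x\in\dR\uR(\Bb_h^{-1}[w])$, and set $A=\dR\uR(\Bb_h^{-1}[w])$. Then $\gamma_x\subseteq A$, and
\[
h(A)=\bigsqcup_{a\in\Bb_h^{-1}[w]}h(\gamma_a)\subseteq\dS(\{w\}),
\]
so monotonicity of $h$ gives $h(\gamma_x)\subseteq\dS(\{w\})$, i.e.\ $x\in\Bb_h^{-1}[w]$.
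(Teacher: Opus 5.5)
Your proof is correct and follows the paper's proof in outline, with one step done differently.

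The row condition and the computation of $(\Bb_h)_*$ match the paper. You write $\bigcap_{x\in A}\uS h(\gamma_x)=\uS\bigl(\bigcup_{x\in A}h(\gamma_x)\bigr)$, where the paper instead applies $\uS$ to the supremum in $\MS$; these are the same set. Your fallback argument for the columns is also exactly the paper's:
\begin{itemize}
\item set $A=\dR\uR(\Bb_h^{-1}[w])$;
\item use sup-preservation of $h$ to get $h(A)\subseteq\dS(\{w\})$;
\item conclude by monotonicity.
\end{itemize}

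The difference is your main column argument. You identify the column explicitly as $\Bb_h^{-1}[w]=\rho_h(\dS(\{w\}))$, where $\rho_h$ is the right adjoint of $h$. This gives a closed formula showing that $\Bb_h$ is the relational encoding of the adjunction $h\dashv\rho_h$. The paper uses this same fact later, in the form $\Bc_*(A)\subseteq\dT(\{v\})\iff A\subseteq\Bc^{-1}[v]$, to describe composition in $\Bond$. The paper's route is more elementary, since it proves closedness of the column directly without citing the adjoint functor theorem for complete lattices.

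The two arguments have the same content. The fact that $\rho_h(C)=\bigsqcup\{A\in\MR\mid h(A)\subseteq C\}$ is a right adjoint rests on exactly the sup-preservation step the paper carries out for $C=\dS(\{w\})$.
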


\begin{proof}
First, for each $w\in W$, the column $\Bb_h^{-1}[w]$ is given by
\[
\Bb_h^{-1}[w]=\{x\in X\mid h\dR\uR(\{x\})\subseteq \dS(\{w\})\}.
\]
Since $h$ preserves arbitrary suprema, this set is an $R$-extent. Indeed, if $A=\dR\uR(\Bb_h^{-1}[w])$, then
\[
h(A)=h\Big(\bigsqcup_{x\in \Bb_h^{-1}[w]}\dR\uR(\{x\})\Big)
=\bigsqcup_{x\in \Bb_h^{-1}[w]}h\dR\uR(\{x\})
\subseteq \dS(\{w\}).
\]
If $x\in A$, then $\dR\uR(\{x\})\subseteq A$ (because $A$ is an $R$-extent). As every sup-preserving map is monotone, it follows that
\[
h\dR\uR(\{x\})\subseteq h(A)\subseteq \dS(\{w\}).
\]
Hence $x\in\Bb_h^{-1}[w]$. Therefore $A\subseteq \Bb_h^{-1}[w]$, and so $\Bb_h^{-1}[w]$ is an $R$-extent.

Next, for each $x\in X$, the row $\Bb_h[x]$ is an $S$-intent because
\[
\Bb_h[x]=\uS h\dR\uR(\{x\}).
\]
Thus $\Bb_h$ is a bond.

Finally, for every $A\in\MR$,
\[
\begin{aligned}
(\Bb_h)_*(A)
&=
\dS\Bb_h^\ua(A)
\\
&=
\dS(\{w\in W\mid \forall a\in A,\ a \Bb_h w\})
\\
&=
\dS\Big(\bigcap\limits_{a\in A}\uS h\dR\uR(\{a\})\Big)\\
&=
\dS\uS\Big(\bigsqcup_{a\in A}h\dR\uR(\{a\})\Big)
\\
&=
\bigsqcup_{a\in A}h\dR\uR(\{a\})
\\
&=
h\Big(\bigsqcup_{a\in A}\dR\uR(\{a\})\Big)
\\
&=
h(A).
\end{aligned}
\]
This proves $(\Bb_h)_*=h$.
\end{proof}

\begin{cor} \label{bonds-sup-maps}
For relations $R\colon X\rto Y$ and $S\colon Z\rto W$, bonds from $R$ to $S$ are in one-to-one correspondence with sup-preserving maps $\MR\to\MS$.
\end{cor}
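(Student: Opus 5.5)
The statement to prove is Corollary~\ref{bonds-sup-maps}: the assignments $\Bb\mapsto\Bb_*$ and $h\mapsto\Bb_h$ are mutually inverse. I will combine the two preceding propositions.

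\emph{Well-definedness.} By Proposition~\ref{bond-map-sup}, $\Bb\mapsto\Bb_*$ sends bonds $R\to S$ to sup-preserving maps $\MR\to\MS$. By Proposition~\ref{sup-map-bond}, $h\mapsto\Bb_h$ sends sup-preserving maps to bonds.

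\emph{One composite.} Proposition~\ref{sup-map-bond} also gives $(\Bb_h)_*=h$. So $\Bb\mapsto\Bb_*$ is surjective, and $h\mapsto\Bb_h$ is a right inverse of it.

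\emph{The other composite.} It remains to show $\Bb_{\Bb_*}=\Bb$ for every bond $\Bb$. Unwinding \eqref{bond-from-join-map}, for $x\in X$ and $w\in W$ we have
\[
x\Bb_{\Bb_*}w\iff w\in\uS\dS\Bb^\ua\dR\uR(\{x\}).
\]
I will simplify the right-hand side in two steps.

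First, the columns of $\Bb$ are $R$-extents. As shown in the proof of Proposition~\ref{bond-map-sup}, this gives $\Bb^\ua\dR\uR(\{x\})=\Bb^\ua(\{x\})=\Bb[x]$.

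Second, by \ref{bond-def:row} the row $\Bb[x]$ is an $S$-intent, so $\uS\dS(\Bb[x])=\Bb[x]$.

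Hence $x\Bb_{\Bb_*}w\iff w\in\Bb[x]\iff x\Bb w$, so $\Bb_{\Bb_*}=\Bb$.

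Both composites are therefore identities, which gives the bijection. The only point needing care is the first simplification in the last step. It uses that for every $w$ we have $A\subseteq\Bb^{-1}[w]\iff\dR\uR(A)\subseteq\Bb^{-1}[w]$, because $\Bb^{-1}[w]$ is a closed set. This is exactly the observation already made in the proof of Proposition~\ref{bond-map-sup}, so no real obstacle remains.
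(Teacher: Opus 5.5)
Your proof is correct and follows essentially the same route as the paper. Both arguments get the two assignments from Propositions \ref{bond-map-sup} and \ref{sup-map-bond}, and both settle uniqueness by recovering $\Bb$ from $\Bb_*$ via \eqref{bond-from-join-map}, using that the columns of $\Bb$ are $R$-extents and that $\Bb^\ua\dR\uR(\{x\})$ is an $S$-intent. The only cosmetic difference is that you first identify $\Bb^\ua\dR\uR(\{x\})$ with the row $\Bb[x]$ and then invoke \ref{bond-def:row}, whereas the paper argues membership-wise with $C_x=\dR\uR(\{x\})$ and notes that $\Bb^\ua(C_x)$ is an intersection of $S$-intent rows.
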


\begin{proof}
By Propositions \ref{bond-map-sup} and \ref{sup-map-bond}, each bond $\Bb$ induces a sup-preserving map $\Bb_*$, and each sup-preserving map $h$ induces a bond $\Bb_h$ with $(\Bb_h)_*=h$.


It remains only to observe uniqueness. Let $\Bb$ be a bond, and fix $x\in X$ and $w\in W$. Put
\[
C_x=\dR\uR(\{x\}).
\]
Since $\Bb^{-1}[w]$ is an $R$-extent and $C_x$ is the least $R$-extent containing $x$, we have
\[
x\Bb w
\iff
C_x\subseteq\Bb^{-1}[w]
\iff
w\in\Bb^\ua(C_x).
\]
Moreover, $\Bb^\ua(C_x)$ is an $S$-intent, being the intersection of the $S$-intent rows $\Bb[a]$ for $a\in C_x$. Hence
\[
w\in\Bb^\ua(C_x)
\iff
w\in\uS\dS\Bb^\ua(C_x)
\iff
\dS\Bb^\ua(C_x)\subseteq\dS(\{w\}).
\]
By the definition of $\Bb_*$, this gives
\[
x\Bb w
\iff
\Bb_*\dR\uR(\{x\})\subseteq\dS(\{w\}).
\]
Hence $\Bb$ is recovered from $\Bb_*$ by formula \eqref{bond-from-join-map}.
\end{proof}

The preceding correspondence also supplies the standard category of bonds. Let
\[
\Bond
\]
have formal contexts as objects and bonds as morphisms. The identity morphism
on a context
\[
R\colon X\rto Y
\]
is the relation $R$ itself: its rows are the $R$-intents
$\uR(\{x\})$, its columns are the $R$-extents $\dR(\{y\})$, and
\[
R_*=1_{\MR}.
\]
For bonds
\[
\Bb\colon R\to S
\qquad\text{and}\qquad
\Bc\colon S\to T,
\]
where
\[
R\colon X\rto Y,
\qquad
S\colon Z\rto W,
\qquad
T\colon U\rto V,
\]
define their composite $\Bc\circ\Bb\colon R\to T$ to be the unique bond
corresponding under Corollary \ref{bonds-sup-maps} to the sup-preserving map
\[
\Bc_*\circ\Bb_*\colon\MR\to\sM T.
\]
Since $\Bb^\ua\dR\uR(\{x\})=\Bb[x]$, one has
\[
\Bb_*\dR\uR(\{x\})=\dS(\Bb[x]).
\]
Moreover, the recovery formula established in the proof of Corollary
\ref{bonds-sup-maps}, together with preservation of suprema, gives
\[
\Bc_*(A)\subseteq\dT(\{v\})
\iff
A\subseteq\Bc^{-1}[v]
\]
for every $A\in\MS$ and $v\in V$. It follows that the composite has the
elementwise description
\[
x(\Bc\circ\Bb)v
\iff
\dS(\Bb[x])\subseteq\Bc^{-1}[v]
\]
for all $x\in X$ and $v\in V$, and by construction
\[
(\Bc\circ\Bb)_*=\Bc_*\circ\Bb_*.
\]
Since bonds are uniquely determined by their induced maps, associativity and
the unit laws follow from the corresponding laws for composition of maps.
Thus $\Bond$ is a category.

\begin{rem} \label{Bond-BRel}
The category $\Bond$ has a quantaloid-theoretic interpretation. The notion of
a bond above is the specialization to the quantaloid $\Rel$ (see
\cite{Rosenthal1996} for the theory of \emph{quantaloids}) of the notion of a bond
between arrows in an arbitrary quantaloid \cite{Shen2016a}. More generally,
one begins with Chu connections and forms their quotient quantaloid of back
diagonals. For $\Rel$, the resulting quantaloid is denoted by $\BB(\Rel)$, and
it is canonically isomorphic to $\Bond$
(see \cite[Proposition~2.3.5]{Shen2016a}). Under this identification, a back
diagonal from $R\colon X\rto Y$ to $S\colon Z\rto W$ is exactly a relation
$\Bb\colon X\rto W$ whose rows are $S$-intents and whose columns are
$R$-extents. Thus the present definition is the elementwise form of the
back-diagonal construction.
\end{rem}

Consider the concept-lattice assignment
\[
\sM_0\colon\Bond\to\Sup,
\qquad
R\mapsto\MR,
\qquad
\Bb\mapsto\Bb_*.
\]
Corollary \ref{bonds-sup-maps} says that this functor is fully faithful. The
basic theorem on concept lattices (see \cite[Theorem 3]{Ganter1999} and
\cite[Theorem 3.9]{Davey2002}) says that every complete lattice is
isomorphic to the concept lattice of a formal context, so $\sM_0$ is
essentially surjective. Hence it is an equivalence of categories:

\begin{prop} (See \cite{Mori2008,Shen2016a}.) \label{bond-category-sup-equivalence}
The functor $\sM_0\colon\Bond\to\Sup$ is an equivalence of categories.
\end{prop}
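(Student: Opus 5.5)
To prove Proposition \ref{bond-category-sup-equivalence}, I will use the standard criterion that a functor is an equivalence exactly when it is fully faithful and essentially surjective. I will check these two properties separately. Before doing so, I confirm that $\sM_0$ is indeed a functor.

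Functoriality is immediate from the definitions preceding the statement. The identity bond on $R$ is $R$ itself, and $R_*=1_{\MR}$ because $\dR\uR(A)=A$ for every $A\in\MR$. The composite $\Bc\circ\Bb$ was defined so that $(\Bc\circ\Bb)_*=\Bc_*\circ\Bb_*$. Proposition \ref{bond-map-sup} guarantees that each $\Bb_*$ is a morphism of $\Sup$. So $\sM_0$ preserves identities and composition.

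Full faithfulness follows from Corollary \ref{bonds-sup-maps}. For each pair of contexts $R,S$, the assignment $\Bb\mapsto\Bb_*$ is a bijection between $\Bond(R,S)$ and $\Sup(\MR,\MS)$. Surjectivity is Proposition \ref{sup-map-bond}. Injectivity is the recovery formula in the proof of the corollary.

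For essential surjectivity, take a complete lattice $L$. Consider the context $\le\colon L\rto L$, given by $x\le y$. For $A\subseteq L$, the set $\uparrow$-bounds $\le^{\ua}(A)$ is the set of upper bounds of $A$, and $\le^{\da}\le^{\ua}(A)$ is the principal down-set ${\da}\bigvee A$. So the extents are exactly the principal down-sets ${\da}a$ for $a\in L$. The map $a\mapsto{\da}a$ is an order isomorphism $L\cong\sM(\le)$, where the right side is ordered by inclusion. An order isomorphism between complete lattices preserves all suprema, so it is an isomorphism in $\Sup$. This is the basic theorem on concept lattices, cited in the text.

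The only point that needs care is the computation of extents for the context $\le$. Every step reduces to results already established, so I expect no real obstacle.
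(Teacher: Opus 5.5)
Your proof is correct and follows essentially the same route as the paper: full faithfulness comes from Corollary~\ref{bonds-sup-maps}, and essential surjectivity comes from the basic theorem on concept lattices. The only difference is that you make the second step explicit by checking that the extents of the context $(L,L,\leq)$ are exactly the principal down-sets ${\downarrow}a$.
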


More precisely, this is the bond formulation of Mori's Galois equivalence
\cite[Theorem~73]{Mori2008}; in the quantaloid setting, it is the case
$\CQ=\mathbf{2}$ of \cite[Proposition~2.3.5 and
Theorem~3.4.4]{Shen2016a}.

\section{Quantales as formal concept lattices}

We now explain how quantales arise as formal concept lattices of residuated
relations. The key point is that, for a semigroup $X$ and a set $Y$, a relation
\[
R\colon X\rto Y
\]
induces a Galois closure
\[
\sj_R:=\dR\uR\colon \PX\to\PX,
\]
and, under a residuation condition, this closure is a quantic nucleus on the free quantale $\PX$.

\begin{defn} \label{residuated-relation-def}
Let $X$ be a semigroup and $Y$ a set. A relation $R\colon X\rto Y$ is
\emph{residuated} if there exist maps
\[
\lda\colon Y\times X\to Y
\quad\text{and}\quad
\rda\colon X\times Y\to Y
\]
such that
\begin{equation} \label{residuated-relation-iff}
(xx',y)\in R\iff(x,y\lda x')\in R\iff (x',x\rda y)\in R
\end{equation}
for all $x,x'\in X$ and $y\in Y$.
\end{defn}

Equivalently, writing $xRy$ for $(x,y)\in R$, the condition says that the semigroup multiplication on the domain $X$ admits right residuals with respect to the relation $R$:
\[
(xx')Ry
\iff
xR(y\lda x')
\iff
x'R(x\rda y).
\]
Thus $y\lda x'$ and $x\rda y$ play the role of the right residuals of $y$ by $x'$ and of $y$ by $x$, respectively, but only relative to the relation $R$.

A basic special case is obtained from ordered semigroups. By a \emph{residuated ordered semigroup} we mean an ordered semigroup $X$ whose order relation
\[
\leq\ \colon X\rto X
\]
is a residuated relation. Explicitly, this means that there exist maps
\[
/\colon X\times X\to X
\quad\text{and}\quad
\backslash\colon X\times X\to X
\]
such that
\[
xx'\leq y
\iff
x\leq y/ x'
\iff
x'\leq x\backslash y
\]
for all $x,x',y\in X$. In this case, the residuals are internal to the ordered semigroup $X$.


In particular, as an immediate consequence of the residual adjunctions
\eqref{quantale-lda-rda}, every quantale $Q$ is a residuated ordered
semigroup. Example~\ref{example-sup-inf-dense-ordered-presentations}
extends this basic case from $Q$ itself to sup-dense and inf-dense residuated
ordered subsemigroups $A\subseteq Q$. Further concrete realizations of
residuated relations are collected in
Appendix~\ref{appendix-examples}.

\begin{prop} \label{dRuR-nucleus}
Let $R\colon X\rto Y$ be a residuated relation. Then $\sj_R$ is a quantic nucleus on the quantale $\PX$.
\end{prop}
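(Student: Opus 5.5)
Conditions (N1)--(N3) are automatic, since $\sj_R=\dR\uR$ is the closure operator of the Galois connection \eqref{uR-dv-dR}. So the whole task is (N4): for all $A,B\subseteq X$,
\[
\sj_R(A)*\sj_R(B)\subseteq\sj_R(A*B).
\]
I would show this elementwise with Lemma \ref{dRuR-fix}. Take $x\in\sj_R(A)$ and $x'\in\sj_R(B)$. It suffices to show that for every $y\in Y$ with $(ab)Ry$ for all $a\in A$, $b\in B$, we have $(xx')Ry$.

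I would move $y$ across the two factors one at a time, using \eqref{residuated-relation-iff}, and pass from $A$ to $x$ and then from $B$ to $x'$.

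First, fix $y$ with $(ab)Ry$ for all $a\in A$, $b\in B$. Fix $b\in B$. By the first equivalence in \eqref{residuated-relation-iff}, $aR(y\lda b)$ holds for all $a\in A$. Since $x\in\dR\uR(A)$, Lemma \ref{dRuR-fix} gives $xR(y\lda b)$, and by \eqref{residuated-relation-iff} again $(xb)Ry$. So $(xb)Ry$ holds for all $b\in B$.

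Second, by the second equivalence in \eqref{residuated-relation-iff}, $bR(x\rda y)$ holds for all $b\in B$. Since $x'\in\dR\uR(B)$, Lemma \ref{dRuR-fix} yields $x'R(x\rda y)$, that is, $(xx')Ry$. This gives $xx'\in\sj_R(A*B)$.

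I do not expect a genuine obstacle here. The only point needing care is the order of the two steps: fix the right factor $b$ while replacing $A$ by $x$ through the left residual $y\lda b$, and only afterwards replace $B$ by $x'$ through the right residual $x\rda y$ with $x$ now fixed. Trying to use both residuals simultaneously would not work, because we need an attribute that depends on only one unknown factor at a time.
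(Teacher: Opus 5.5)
Your proof is correct and follows essentially the same argument as the paper: (N1)--(N3) come from the Galois connection, and (N4) is checked elementwise via Lemma~\ref{dRuR-fix}, first replacing $A$ by $x$ through the residual $y\lda b$ and then replacing $B$ by $x'$ through $x\rda y$. The only cosmetic difference is that you pass explicitly through $(xb)Ry$, whereas the paper goes directly from $xR(y\lda b)$ to $bR(x\rda y)$.
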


\begin{proof}
The map $\sj_R=\dR\uR\colon\PX\to\PX$ is already a closure operator, since it is induced by the Galois connection between $\PX$ and $(\PY)^{\op}$. It remains to show that it is compatible with the quantale multiplication on $\PX$ (see \ref{nucleus:*}); that is,
\[
(\dR\uR(A))*(\dR\uR(B))\subseteq\dR\uR(A*B)
\]
for all $A,B\subseteq X$.

Let $x\in\dR\uR(A)$ and $x'\in\dR\uR(B)$. We show that $xx'\in\dR\uR(A*B)$. By Lemma \ref{dRuR-fix}, it suffices to prove that, for each $y\in Y$, if $(ab)Ry$ for all $a\in A$ and $b\in B$, then $(xx')Ry$. Suppose that $(ab)Ry$ for all $a\in A$ and $b\in B$. For every $b\in B$, residuation gives
\[
(ab)Ry
\iff
aR(y\lda b).
\]
Hence $aR(y\lda b)$ for all $a\in A$. Since $x\in\dR\uR(A)$, Lemma \ref{dRuR-fix} gives
$xR(y\lda b)$. Again by residuation, this is equivalent to
$bR(x\rda y)$. Thus $bR(x\rda y)$ for every $b\in B$. Since $x'\in\dR\uR(B)$, Lemma \ref{dRuR-fix} gives
$x'R(x\rda y)$. By residuation once more, this is equivalent to $(xx')Ry$. Therefore $xx'\in\dR\uR(A*B)$, as required.
\end{proof}

Since $\sj_R$ is a quantic nucleus on $\PX$, the quotient-quantale construction recalled in Section \ref{Quantales} applies. Its fixed-point lattice is precisely the lattice of $R$-extents:
\[
(\PX)_{\sj_R}
=
\Fix(\sj_R)
=
\MR.
\]
Thus the formulas in \eqref{Qj-multiplication-sup} immediately yield the following canonical quantale structure on the concept lattice of $R$.

\begin{prop} \label{MR-is-quantale}
Let $R\colon X\rto Y$ be a residuated relation. Then $\MR$ is a quantale, with multiplication and suprema given by
\[
A\otimes_R B
=
\dR\uR(A*B),
\qquad
\bigsqcup_{i\in I}A_i
=
\dR\uR\Big(\bigcup_{i\in I}A_i\Big)
\]
for all $A,B\in\MR$ and all families $\{A_i\}_{i\in I}$ in $\MR$. Moreover, the closure map
\[
\sj_R=\dR\uR\colon \PX\to\MR
\]
is a homomorphism of quantales. When the relation $R$ is clear from the context, we simply write $A\otimes B$ instead of $A\otimes_R B$.
\end{prop}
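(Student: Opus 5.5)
The plan is to read Proposition \ref{MR-is-quantale} off the general quotient construction of Section \ref{Quantales}, applied to the quantic nucleus supplied by Proposition \ref{dRuR-nucleus}. Since $R$ is residuated, Proposition \ref{dRuR-nucleus} shows that $\sj_R=\dR\uR$ is a quantic nucleus on the free quantale $(\PX,*)$. The quotient $(\PX)_{\sj_R}$ is therefore a quantale. Its multiplication and suprema are given by \eqref{Qj-multiplication-sup}, namely $A*_{\sj_R}B=\sj_R(A*B)$ and $\bigsqcup_i A_i=\sj_R(\bigcup_i A_i)$. Moreover, $\sj_R\colon\PX\to(\PX)_{\sj_R}$ is a homomorphism of quantales.

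Next I will identify the underlying set of $(\PX)_{\sj_R}$ with $\MR$. By definition, $(\PX)_{\sj_R}=\Fix(\sj_R)=\Fix(\dR\uR)=\Ext(R)=\MR$. The order on both sides is inclusion. Substituting $\sj_R=\dR\uR$ into the quotient formulas gives exactly $A\otimes_R B=\dR\uR(A*B)$ and $\bigsqcup_i A_i=\dR\uR(\bigcup_i A_i)$. The supremum formula agrees with the one already recorded for the complete lattice $\MR$ in Section 3, so the lattice structure of the quotient is that of the concept lattice.

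Since every step is a direct specialization, no real obstacle arises. The only point needing care is the claim, recalled but not proved in Section \ref{Quantales}, that $Q_{\sj}$ is a quantale and that $\sj$ is a homomorphism. If I want the proof self-contained, I will verify three things.

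\begin{enumerate}[label=(\arabic*)]
\item Associativity. Using \eqref{j-jx-jy}, we get $\sj(\sj(A*B)*C)=\sj(A*B*C)=\sj(A*\sj(B*C))$.
\item Distributivity over suprema. We have $\sj(A*\sj(\bigcup_i B_i))=\sj(A*\bigcup_i B_i)=\sj(\bigcup_i(A*B_i))$. By \eqref{j-jx-jy} again, this equals $\sj(\bigcup_i\sj(A*B_i))$, and the symmetric identity holds on the other side.
\item The homomorphism property. $\sj$ preserves multiplication because $\sj(A*B)=\sj(\sj A*\sj B)=\sj A\otimes\sj B$. It preserves suprema because $\sj(\bigcup_i A_i)=\sj(\bigcup_i\sj A_i)$, which follows from the closure axioms \ref{nucleus:m}--\ref{nucleus:i}.
\end{enumerate}
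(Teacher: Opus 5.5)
Your proposal is correct and follows the paper's own route: the paper also derives the proposition directly from Proposition \ref{dRuR-nucleus}, the quotient-quantale formulas \eqref{Qj-multiplication-sup}, and the identification $(\PX)_{\sj_R}=\Fix(\sj_R)=\MR$. Your optional self-contained check is sound, with one misattribution in (2): the step $\sj(\bigcup_i(A*B_i))=\sj(\bigcup_i\sj(A*B_i))$ follows from the closure axioms \ref{nucleus:m}--\ref{nucleus:i}, as in your item (3), not from \eqref{j-jx-jy}.
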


We now obtain the object-level representation theorem. Proposition~\ref{MR-is-quantale} shows that every residuated relation $R\colon X\rto Y$ gives rise to a quantale $\MR$. The following theorem proves the converse: every quantale is isomorphic to $\MR$ for some residuated relation $R\colon X\rto Y$. More precisely, it characterizes the residuated relations presenting a given quantale. It is a quantale-theoretic refinement of the basic theorem on concept lattices (see \cite[Theorem 3]{Ganter1999} and \cite[Theorem 3.9]{Davey2002}).

Let $Q$ be a complete lattice. A map $f\colon X\to Q$ is called \emph{sup-dense} if
\begin{equation} \label{sup-dense-def}
q=\bv\{f(x)\mid x\in X,\ f(x)\leq q\}
\end{equation}
for all $q\in Q$, and it is called \emph{inf-dense} if
\begin{equation} \label{inf-dense-def}
q=\bigwedge\{f(x)\mid x\in X,\ q\leq f(x)\}
\end{equation}
for all $q\in Q$.

\begin{thm} \label{sup-inf-dense-criterion}
Let $Q$ be a quantale, and let $R\colon X\rto Y$ be a residuated relation. Then $Q$ is isomorphic to $\MR$ as a quantale if and only if there exist a sup-dense semigroup homomorphism $f\colon X\to Q$ and an inf-dense map $g\colon Y\to Q$ such that
\begin{equation} \label{dense-representation-relation}
xRy
\iff
f(x)\leq g(y)
\end{equation}
for all $x\in X$ and $y\in Y$. In particular, every quantale $Q$ is isomorphic, as a quantale, to $\sM(Q,Q,\leq)$.
\end{thm}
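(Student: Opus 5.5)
We prove the two directions separately and then specialize.

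\emph{Sufficiency.} Suppose $f\colon X\to Q$ is a sup-dense semigroup homomorphism, $g\colon Y\to Q$ is inf-dense, and \eqref{dense-representation-relation} holds.

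First we identify $R$-extents. For $A\subseteq X$ put $q_A=\bv f(A)$. By \eqref{dense-representation-relation}, $y\in\uR(A)$ iff $q_A\leq g(y)$. Hence $x\in\dR\uR(A)$ iff $f(x)\leq g(y)$ for every $y$ with $q_A\leq g(y)$. By inf-density this is equivalent to $f(x)\leq q_A$. So
\[
\dR\uR(A)=f^{-1}({\da}q_A),
\]
and every $R$-extent has the form $f^{-1}({\da}q)$.

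Define $\phi\colon Q\to\MR$ by $\phi(q)=f^{-1}({\da}q)$. Sup-density makes $\phi$ injective, since $q=\bv f(\phi(q))$. The computation above shows $\phi$ is surjective. Both $\phi$ and its inverse $A\mapsto\bv f(A)$ are monotone, so $\phi$ is an order isomorphism and therefore preserves suprema.

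Multiplicativity reduces to showing $\bv f(A*B)=q_A*q_B$ for extents $A,B$. We have $f(ab)=f(a)*f(b)$ because $f$ is a homomorphism. Distributivity of $*$ over joins in $Q$ then gives
\[
\bv f(A*B)=q_A*q_B,
\]
so $A\otimes_R B=\dR\uR(A*B)=f^{-1}({\da}(q_A*q_B))=\phi(q_A*q_B)$. Thus $\phi$ is a quantale isomorphism.

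\emph{Necessity.} Let $\psi\colon\MR\to Q$ be a quantale isomorphism. Set
\[
f(x)=\psi(\dR\uR\{x\}),\qquad g(y)=\psi(\dR\{y\}).
\]

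\begin{itemize}
\item $f$ is a semigroup homomorphism. By Proposition~\ref{MR-is-quantale}, $\sj_R\colon\PX\to\MR$ is a quantale homomorphism, so $\sj_R\{xx'\}=\sj_R(\{x\}*\{x'\})=\sj_R\{x\}\otimes\sj_R\{x'\}$. Composing with $\psi$ gives $f(xx')=f(x)*f(x')$.
\item $f$ is sup-dense. Every extent $A$ is the join in $\MR$ of the $\sj_R\{x\}$ for $x\in A$, and $\sj_R\{x\}\subseteq A$ iff $x\in A$.
\item $g$ is inf-dense. Every extent is an intersection of columns $\dR\{y\}$, and intersections are the meets in $\MR$.
\item Relation \eqref{dense-representation-relation} holds: $xRy$ iff $x\in\dR\{y\}$ iff $\sj_R\{x\}\subseteq\dR\{y\}$, the last step because $\dR\{y\}$ is an extent.
\end{itemize}

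\emph{The special case.} Take $X=Y=Q$, $R={\leq}$, and $f=g=\mathrm{id}_Q$. This relation is residuated because $Q$ is a residuated ordered semigroup via \eqref{quantale-lda-rda}. The identity is trivially sup-dense, inf-dense and multiplicative, so sufficiency gives $Q\cong\sM(Q,Q,\leq)$.

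The only step needing care is the multiplicativity in the sufficiency direction. It relies on two facts together: the product of extents is computed by the closure formula of Proposition~\ref{MR-is-quantale}, and joins in $Q$ distribute over $*$ on both sides. The rest is routine.
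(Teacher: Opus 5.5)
Your proof is correct and follows the paper's argument. For necessity you use the same maps $f(x)=\Psi\dR\uR(\{x\})$ and $g(y)=\Psi\dR(\{y\})$. For sufficiency you use the same ingredients: inf-density yields $\dR\uR(A)=\{x\mid f(x)\leq\bv f[A]\}$, sup-density yields bijectivity, and the homomorphism property plus distributivity yields multiplicativity. The only difference is packaging: you work with the inverse map $q\mapsto f^{-1}(\da q)$, and because it is an order isomorphism you get sup-preservation for free. One check is left unstated, namely that each $f^{-1}(\da q)$ is actually an extent; it follows at once from your closure formula applied to $A=f^{-1}(\da q)$ together with $\bv f(f^{-1}(\da q))=q$.
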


\begin{proof}
{\bf Sufficiency.} Suppose first that such maps $f$ and $g$ exist. Define a map
\[
\Phi\colon \PX\to Q,\quad\Phi(A)=\bv f[A]
=
\bv\{f(a)\mid a\in A\}.
\]
We show that the restriction of $\Phi$ to $\MR$ is an isomorphism of
quantales.

First, $\left.\Phi\right|_{\MR}\colon\MR\to Q$ is injective. For
$A\subseteq X$ and $y\in Y$, condition
\eqref{dense-representation-relation} gives
\begin{align*}
y\in\uR(A)
&\iff
aRy\ \text{for all }a\in A
\\
&\iff
f(a)\leq g(y)\ \text{for all }a\in A
\\
&\iff
\bv f[A]\leq g(y).
\end{align*}
Hence
\begin{equation} \label{uR-A-Phi-A-gy}
\uR(A)
=
\{y\in Y\mid \Phi(A)\leq g(y)\}.
\end{equation}
For $A\in\MR$, we claim that
\[
A=\{x\in X\mid f(x)\leq \Phi(A)\},
\]
hence an extent $A$ is completely determined by $\Phi(A)$, which necessarily forces $\left.\Phi\right|_{\MR}$ to be injective. Indeed, if $x\in A$, then clearly $f(x)\leq \Phi(A)$. Conversely, suppose that $f(x)\leq \Phi(A)$. To show $x\in A$, since $A=\dR\uR(A)$, it is enough to show that $xRy$ for every $y\in\uR(A)$. But for every $y\in\uR(A)$, we have $\Phi(A)\leq g(y)$ by \eqref{uR-A-Phi-A-gy}, and consequently
\[
f(x)\leq \Phi(A)\leq g(y).
\]
Thus $xRy$, and therefore $x\in A$.

Second, $\left.\Phi\right|_{\MR}\colon\MR\to Q$ is surjective. For every $q\in Q$, define
\[
A_q=\{x\in X\mid f(x)\leq q\}.
\]
We show that $A_q$ is an $R$-extent. Since $g$ is inf-dense, combining \eqref{inf-dense-def} and \eqref{dense-representation-relation} we have
\begin{equation} \label{Aq-def}
\begin{aligned}
A_q
&=\{x\in X\mid f(x)\leq g(y)\ \text{for every }y\in Y\text{ with }q\leq g(y)\}\\
&=\{x\in X\mid xRy\ \text{for every }y\in Y\text{ with }q\leq g(y)\}\\
&=\dR(\{y\in Y\mid q\leq g(y)\}).
\end{aligned}
\end{equation}
Thus $A_q$ is an $R$-extent. Moreover, since $f$ is sup-dense, it follows from \eqref{sup-dense-def} that
\begin{equation} \label{Phi-Aq=q}
\Phi(A_q)
=
\bv\{f(x)\mid f(x)\leq q\}
=
q.
\end{equation}
Therefore, the restriction of $\Phi$ to $\MR$ is surjective.

Third, $\left.\Phi\right|_{\MR}\colon\MR\to Q$ preserves arbitrary suprema. Note that for every $U\subseteq X$, from \eqref{uR-A-Phi-A-gy}, \eqref{Aq-def} and \eqref{Phi-Aq=q} we obtain
\begin{equation} \label{Phi-dR-uR-U-Phi-Aq}
\Phi\dR\uR(U)
=
\Phi\dR(\{y\in Y\mid\Phi(U)\leq g(y)\})
=
\Phi(A_{\Phi(U)})=\Phi(U).
\end{equation}
Thus, for any family $\{A_i\}_{i\in I}$ in $\MR$,
\begin{align*}
\Phi\Big(\bigsqcup_{i\in I}A_i\Big)
&=
\Phi\dR\uR\Big(\bigcup_{i\in I}A_i\Big)
\\
&=
\Phi\Big(\bigcup_{i\in I}A_i\Big)
\\
&=
\bv f\Big[\bigcup_{i\in I}A_i\Big]
\\
&=
\bv_{i\in I}\bv f[A_i]
\\
&=
\bv_{i\in I}\Phi(A_i).
\end{align*}

Finally, $\left.\Phi\right|_{\MR}\colon\MR\to Q$ preserves multiplication. Since $f$ is a semigroup homomorphism, we have
\begin{align*}
\Phi(A\otimes_R B)
&=
\Phi\dR\uR(A*B)\\
&=
\Phi(A*B) & (\text{by \eqref{Phi-dR-uR-U-Phi-Aq}})\\
&=
\bv f[A*B]\\
&=
\bv\{f(ab)\mid a\in A,\ b\in B\}\\
&=
\bv\{f(a)*f(b)\mid a\in A,\ b\in B\}\\
&=
\Big(\bv f[A]\Big)*\Big(\bv f[B]\Big) & (\text{by \eqref{quantale-multiplication-preserves-suprema}})\\
&=
\Phi(A)*\Phi(B)
\end{align*}
for all $A,B\in\MR$, as desired.

{\bf Necessity.} Suppose that
\[
\Psi\colon \MR\to Q
\]
is an isomorphism of quantales. Define maps
\[
f\colon X\to Q,\quad f(x)=\Psi\dR\uR(\{x\})
\quad\text{and}\quad
g\colon Y\to Q,\quad g(y)=\Psi\dR(\{y\}).
\]
Then $f$ is a semigroup homomorphism, because
\begin{align*}
f(xx')
&=
\Psi\dR\uR(\{xx'\})\\
&=
\Psi\dR\uR(\{x\}*\{x'\})\\
&=
\Psi\dR\uR(\dR\uR(\{x\})*\dR\uR(\{x'\})) &(\text{by \eqref{j-jx-jy} and Proposition \ref{dRuR-nucleus}})\\
&=
\Psi(\dR\uR(\{x\})\otimes_R\dR\uR(\{x'\}))\\
&=
\Psi\dR\uR(\{x\})*\Psi\dR\uR(\{x'\})\\
&=
f(x)*f(x')
\end{align*}
for all $x,x'\in X$.

For every extent $A\in\MR$ and every $x\in X$, one has
\[
f(x)\leq\Psi(A)
\iff
\dR\uR(\{x\})\subseteq A
\iff
x\in A.
\]
Since
\[
A
=
\dR\uR\Big(\bigcup_{x\in A}\dR\uR(\{x\})\Big)
=
\bigsqcup_{x\in A}\dR\uR(\{x\}),
\]
it follows that
\[
\Psi(A)
=
\bv_{x\in A}f(x)
=
\bv\{f(x)\mid f(x)\leq\Psi(A)\}.
\]
Thus $f$ is sup-dense.

Similarly, for every extent $A\in\MR$ and every $y\in Y$, one has
\[
\Psi(A)\leq g(y)
\iff
A\subseteq\dR(\{y\})
\iff
y\in\uR(A).
\]
Since (cf. Lemma \ref{dRuR-fix})
\[
A=\bigcap_{y\in\uR(A)}\dR(\{y\}),
\]
it follows that
\[
\Psi(A)
=
\bigwedge_{y\in\uR(A)}g(y)
=
\bigwedge\{g(y)\mid\Psi(A)\leq g(y)\}.
\]
Thus $g$ is inf-dense.

Finally, for $x\in X$ and $y\in Y$,
\begin{align*}
xRy
&\iff
x\in\dR(\{y\})
\\
&\iff
\dR\uR(\{x\})\subseteq\dR(\{y\})
\\
&\iff
\Psi\dR\uR(\{x\})\leq\Psi\dR(\{y\})
\\
&\iff
f(x)\leq g(y).
\end{align*}
This proves the converse implication. For the final assertion, take $X=Y=Q$, $R={\leq}$, and $f=g=1_Q$. As observed above, the order relation ${\leq}\colon Q\rto Q$ is residuated, and $1_Q$ is both sup-dense and inf-dense. Hence $Q\cong\sM(Q,Q,\leq)$ as quantales.
\end{proof}

\begin{rem}
The map $g$ in Theorem~\ref{sup-inf-dense-criterion} is not required to
preserve any algebraic structure; indeed, the attribute set $Y$ in
Definition~\ref{residuated-relation-def} is only a set. The role of $g$ is to
express the incidence relation by
\[
xRy
\iff
f(x)\leq g(y)
\]
and to be inf-dense. By contrast, the homomorphism property of $f$ is used to
show that $\Phi(A)=\bv f[A]$ preserves the induced quantale multiplication.
In the canonical order presentation, both $f$ and $g$ are the identity map
$1_Q$.
\end{rem}

\begin{rem}
Theorem~\ref{sup-inf-dense-criterion} and the representation theorem of
Brown and Gurr \cite{Brown1993} both give relational representations of
arbitrary quantales, but the relations play different roles. Brown and Gurr
realize a quantale as a family of binary relations on a set, with
multiplication given by relational composition. Here a single relation
$R\colon X\rto Y$ serves as a formal context: its Galois closure yields the
underlying complete lattice, while multiplication is induced from the
semigroup structure on $X$.
\end{rem}

\begin{rem} \label{category-equivalence-interface}
The preceding representation theorem concerns objects. It will later be
lifted to a categorical statement. Namely, suitable bonds between residuated
relations induce sup-preserving maps between the corresponding concept
quantales; imposing compatibility with the descended multiplications singles
out precisely the quantale homomorphisms. This is the morphism-level
completion of the object-level representation.
\end{rem}

\section{Multiplicative bonds}

Let $R\colon X\rto Y$ and $S\colon Z\rto W$
be residuated relations. By Proposition \ref{MR-is-quantale}, the concept lattices
\[
\MR
\quad\text{and}\quad
\MS
\]
carry canonical quantale structures. We now give the morphism-level lift of
the preceding object-level representation by characterizing those bonds whose
induced maps preserve the quantale multiplication.

This is a refinement of Corollary \ref{bonds-sup-maps}. There we showed that, for arbitrary formal contexts, bonds are precisely the relational data inducing sup-preserving maps between concept lattices. In the present setting, the relations are residuated and the concept lattices carry quantale structures. Hence the natural question is which bonds correspond, under the correspondence of Corollary \ref{bonds-sup-maps}, to homomorphisms of quantales. Proposition \ref{multiplicative-bond-equivalence} answers this question by adding exactly one intrinsic compatibility condition, namely multiplicativity of rows.

The mutually inverse order isomorphisms in
\eqref{uR-dR-MR-IntR-iso} transport the extent-side multiplication of
$\MS$, given by Proposition~\ref{MR-is-quantale}, to the intent side.
Thus, for $P,Q\in\Int(S)$, write
\begin{equation} \label{P-otimes_S-Q-def}
P\otimes_S Q
=
\uS(\dS(P)*\dS(Q))
=
\uS\bigl(\dS(P)\otimes_S\dS(Q)\bigr).
\end{equation}
In the second expression, the inner multiplication $\otimes_S$ is the
extent-side multiplication of $\MS$. Then
\[
\dS(P\otimes_S Q)
=
\dS(P)\otimes_S\dS(Q),
\qquad
\uS(C\otimes_S D)
=
\uS(C)\otimes_S\uS(D)
\]
for all $P,Q\in\Int(S)$ and $C,D\in\MS$.

Let $\Bb\colon R\to S$ be a bond. For each $x\in X$, its $x$-th row
\[
\be_{\Bb}(x):=\Bb[x]=\{w\in W\mid x\Bb w\}
\]
is an $S$-intent (see \ref{bond-def:row}). Thus we have a map
\[
\be_{\Bb}\colon X\to\Int(S),
\qquad
x\mapsto \Bb[x].
\]

\begin{defn} \label{multiplicative-bond-def}
Let $R\colon X\rto Y$ and $S\colon Z\rto W$ be residuated relations. A bond
\[
\Bb\colon R\to S
\]
is called \emph{multiplicative} if
\[
\Bb[xx']
=
\Bb[x]\otimes_S\Bb[x']
\]
for all $x,x'\in X$.
\end{defn}

Equivalently, a bond $\Bb\colon R\to S$ is multiplicative if and only if its row map
\[
\be_{\Bb}\colon X\to\Int(S)
\]
is a semigroup homomorphism from $X$ to the semigroup of $S$-intents equipped with the multiplication $\otimes_S$.

We next prove that this intrinsic row-wise condition is precisely the condition that the induced map between concept lattices preserves the quantale multiplication.

\begin{lem} \label{bond-row-induced-map}
Let $\Bb\colon R\to S$ be a bond. Then, for every $A\in\MR$,
\[
\uS\Bb_*(A)=\Bb^\ua(A).
\]
Moreover, for every $x\in X$,
\[
\uS\Bb_*\dR\uR(\{x\})=\Bb[x].
\]
\end{lem}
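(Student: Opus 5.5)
The first identity follows by unfolding the definition $\Bb_*(A)=\dS\Bb^\ua(A)$, which gives
\[
\uS\Bb_*(A)=\uS\dS\Bb^\ua(A).
\]
So it suffices to show that $\Bb^\ua(A)$ is an $S$-intent; then $\uS\dS$ fixes it. For this I will use condition \ref{bond-def:row}. We have
\[
\Bb^\ua(A)=\bigcap_{a\in A}\Bb[a],
\]
and each row $\Bb[a]$ is an $S$-intent. Intents are closed under arbitrary intersections: $\Int(S)=\Fix(\uS\dS)$, and $\uS$ sends unions to intersections, so an intersection of sets of the form $\uS(C_i)$ equals $\uS(\bigcup_i C_i)$. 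This argument already appears in the proof of Proposition \ref{bond-map-sup}. One edge case needs attention: if $A=\emptyset$, then $\Bb^\ua(A)=W=\uS(\emptyset)$, which is also an intent. Hence $\uS\Bb_*(A)=\Bb^\ua(A)$.

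For the second identity, I apply the first with $A=\dR\uR(\{x\})$, which lies in $\MR$. It then remains to check that $\Bb^\ua\dR\uR(\{x\})=\Bb[x]$. This follows from \ref{bond-def:column}, as in the proof of Proposition \ref{bond-map-sup}. For each $w\in W$,
\[
w\in\Bb^\ua\dR\uR(\{x\})\iff \dR\uR(\{x\})\subseteq\Bb^{-1}[w]\iff x\in\Bb^{-1}[w]\iff w\in\Bb[x].
\]
The middle equivalence holds because $\Bb^{-1}[w]$ is an $R$-extent and $\dR\uR(\{x\})$ is the least extent containing $x$.

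No step is a real obstacle. The only care needed is in keeping track of which bond axiom gives which identity: the row condition yields the first identity, and the column condition yields the second.
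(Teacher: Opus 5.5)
Your proof is correct and follows the paper's argument essentially step for step. The row condition \ref{bond-def:row} makes $\Bb^\ua(A)=\bigcap_{a\in A}\Bb[a]$ an $S$-intent fixed by $\uS\dS$, and the column condition \ref{bond-def:column} together with the least-extent argument gives $\Bb^\ua\dR\uR(\{x\})=\Bb[x]$; your explicit check that intents are closed under intersections, including the empty case $W=\uS(\varnothing)$, is just a harmless elaboration of a step the paper takes for granted.
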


\begin{proof}
Since $\Bb$ is a bond, each row $\Bb[x]$ is an $S$-intent. Thus, for every $A\subseteq X$,
\[
\Bb^\ua(A)=\bigcap_{a\in A}\Bb[a]
\]
is an $S$-intent. Then, it follows from \eqref{bond-induced-map} that
\[
\uS\Bb_*(A)
=
\uS\dS\Bb^\ua(A)
=
\Bb^\ua(A).
\]

For the second assertion, we use the fact that the columns of $\Bb$ are $R$-extents. Note that for every $w\in W$,
\[
w\in \Bb^\ua\dR\uR(\{x\})
\iff
\dR\uR(\{x\})\subseteq \Bb^{-1}[w].
\]
Since $\Bb^{-1}[w]$ is an $R$-extent, this is equivalent to $x\in \Bb^{-1}[w]$, 
that is, to $x\Bb w$. Therefore
\[
\Bb^\ua\dR\uR(\{x\})=\Bb[x].
\]
Together with the first assertion, this gives
\[
\uS\Bb_*\dR\uR(\{x\})=\Bb[x]. \qedhere
\]
\end{proof}

\begin{prop} \label{multiplicative-bond-equivalence}
Let
\[
\Bb\colon R\to S
\]
be a bond between residuated relations. Then $\Bb$ is multiplicative if and only if the induced map
\[
\Bb_*\colon \MR\to\MS
\]
preserves quantale multiplication; that is,
\[
\Bb_*(A\otimes_R B)
=
\Bb_*(A)\otimes_S\Bb_*(B)
\]
for all $A,B\in\MR$.
\end{prop}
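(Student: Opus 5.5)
Both directions go through the principal extents $C_x:=\dR\uR(\{x\})$ and Lemma~\ref{bond-row-induced-map}. I will also use that $\uS$ restricted to $\MS$ is an order isomorphism onto $\Int(S)^{\op}$ by \eqref{uR-dR-MR-IntR-iso}, and that it carries $\otimes_S$ on extents to $\otimes_S$ on intents by \eqref{P-otimes_S-Q-def}.

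\emph{Necessity.} Suppose $\Bb_*$ preserves multiplication. Then by Lemma~\ref{bond-row-induced-map}, $\Bb[xx']=\uS\Bb_*(C_{xx'})$. By \eqref{j-jx-jy} together with Proposition~\ref{dRuR-nucleus}, we have $C_{xx'}=\dR\uR(\{x\}*\{x'\})=\dR\uR(C_x*C_{x'})=C_x\otimes_R C_{x'}$. Hence
\[
\Bb[xx']=\uS\bigl(\Bb_*(C_x)\otimes_S\Bb_*(C_{x'})\bigr)=\uS\Bb_*(C_x)\otimes_S\uS\Bb_*(C_{x'})=\Bb[x]\otimes_S\Bb[x'],
\]
where the last step uses the second assertion of Lemma~\ref{bond-row-induced-map}.

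\emph{Sufficiency.} Suppose $\Bb$ is multiplicative. Applying $\dS$ to the row condition gives $\Bb_*(C_{xx'})=\Bb_*(C_x)\otimes_S\Bb_*(C_{x'})$, because $\dS\Bb[x]=\Bb_*(C_x)$ and $\dS$ transports $\otimes_S$. So $\Bb_*$ preserves the product of principal extents. To extend to arbitrary $A,B\in\MR$, write $A=\bigsqcup_{a\in A}C_a$ and $B=\bigsqcup_{b\in B}C_b$. Since $\otimes_R$ distributes over $\bigsqcup$ in the quantale $\MR$ (Proposition~\ref{MR-is-quantale}), $A\otimes_R B=\bigsqcup_{a,b}C_a\otimes_R C_b=\bigsqcup_{a,b}C_{ab}$. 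Now $\Bb_*$ preserves suprema (Proposition~\ref{bond-map-sup}), and $\otimes_S$ distributes over suprema in $\MS$, so
\[
\Bb_*(A\otimes_R B)=\bigsqcup_{a,b}\Bb_*(C_a)\otimes_S\Bb_*(C_b)=\Bb_*(A)\otimes_S\Bb_*(B).
\]

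The main obstacle is the bookkeeping of transports between the intent side and the extent side. Specifically, I need to verify $\dS(P\otimes_S Q)=\dS P\otimes_S\dS Q$ for intents $P,Q$. This follows from \eqref{P-otimes_S-Q-def}: $\dS\uS$ is the identity on the extent $\dS(P)\otimes_S\dS(Q)$. I also need the identification $C_x\otimes_R C_{x'}=C_{xx'}$ via \eqref{j-jx-jy}. Once these two identities are in place, the remaining argument is a density computation.
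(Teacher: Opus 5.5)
Your proof is correct and follows the paper's argument. Both directions reduce to principal extents $C_x=\dR\uR(\{x\})$ via \eqref{j-jx-jy} and Lemma~\ref{bond-row-induced-map}, and the passage from multiplicative rows to a multiplicative $\Bb_*$ extends by sup-density and distributivity exactly as in the paper. The only cosmetic differences are that you apply $\dS$ directly where the paper applies $\uS$ and then uses its injectivity on extents, and that your labels ``necessity''/``sufficiency'' are swapped relative to the paper's.
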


\begin{proof}
{\bf Necessity.} Suppose that $\Bb$ is multiplicative. 
Let $x,x'\in X$. Then
\begin{align*}
&\uS\Bb_*\left(\dR\uR(\{x\})\otimes_R\dR\uR(\{x'\})\right)\\
={}&
\uS\Bb_*\dR\uR(\{xx'\})
&(\text{by \eqref{j-jx-jy} and Propositions \ref{dRuR-nucleus}, \ref{MR-is-quantale}})\\
={}& 
\Bb[xx'] 
&(\text{by Lemma \ref{bond-row-induced-map}})\\
={}& 
\Bb[x]\otimes_S\Bb[x']
&(\text{by Definition \ref{multiplicative-bond-def}})\\
={}&
\uS\Bb_*\dR\uR(\{x\})\otimes_S\uS\Bb_*\dR\uR(\{x'\})
&(\text{by Lemma \ref{bond-row-induced-map}})\\
={}&
\uS\left(\Bb_*\dR\uR(\{x\})\otimes_S\Bb_*\dR\uR(\{x'\})\right).
&(\text{by \eqref{P-otimes_S-Q-def}})
\end{align*}
Since $\uS$ is injective on $S$-extents, we get
\begin{equation} \label{Bb-preserves-multiplication-x-x'}
\Bb_*\left(\dR\uR(\{x\})\otimes_R\dR\uR(\{x'\})\right)
=
\Bb_*\dR\uR(\{x\})\otimes_S\Bb_*\dR\uR(\{x'\}).
\end{equation}

Now let $A,B\in\MR$. Since
\[
A=\bigsqcup_{a\in A}\dR\uR(\{a\}),
\qquad
B=\bigsqcup_{b\in B}\dR\uR(\{b\}),
\]
and since multiplication in both $\MR$ and $\MS$ distributes over arbitrary
suprema, we obtain
\begin{align*}
\Bb_*(A\otimes_R B)
&=
\Bb_*\Big(
\Big(\bigsqcup_{a\in A}\dR\uR(\{a\})\Big)
\otimes_R
\Big(\bigsqcup_{b\in B}\dR\uR(\{b\})\Big)
\Big)\\
&=
\Bb_*\Big(
\bigsqcup_{a\in A,\ b\in B}
\left(\dR\uR(\{a\})\otimes_R\dR\uR(\{b\})\right)
\Big)\\
&=
\bigsqcup_{a\in A,\ b\in B}
\Bb_*\left(\dR\uR(\{a\})\otimes_R\dR\uR(\{b\})\right)
& (\text{by Proposition \ref{bond-map-sup}})\\
&=
\bigsqcup_{a\in A,\ b\in B}
\Bb_*\dR\uR(\{a\})\otimes_S\Bb_*\dR\uR(\{b\})
& (\text{by \eqref{Bb-preserves-multiplication-x-x'}})\\
&=
\Big(\bigsqcup_{a\in A}\Bb_*\dR\uR(\{a\})\Big)
\otimes_S
\Big(\bigsqcup_{b\in B}\Bb_*\dR\uR(\{b\})\Big)\\
&=
\Bb_*(A)\otimes_S\Bb_*(B). & (\text{by Proposition \ref{bond-map-sup}})
\end{align*}
Thus $\Bb_*$ preserves quantale multiplication.

{\bf Sufficiency.} Suppose that $\Bb_*$ preserves quantale multiplication. Let
$x,x'\in X$. Then it follows from \eqref{j-jx-jy} and Propositions
\ref{dRuR-nucleus}, \ref{MR-is-quantale} that
\[
\begin{aligned}
\Bb_*\dR\uR(\{xx'\})
&=
\Bb_*\left(\dR\uR(\{x\})\otimes_R\dR\uR(\{x'\})\right)\\
&=
\Bb_*\dR\uR(\{x\})\otimes_S\Bb_*\dR\uR(\{x'\}).
\end{aligned}
\]
Therefore,
\begin{align*}
\Bb[xx']
&=
\uS\Bb_*\dR\uR(\{xx'\})
&(\text{by Lemma \ref{bond-row-induced-map}})\\
&=
\uS\left(\Bb_*\dR\uR(\{x\})\otimes_S\Bb_*\dR\uR(\{x'\})\right)\\
&=
\uS\Bb_*\dR\uR(\{x\})\otimes_S\uS\Bb_*\dR\uR(\{x'\})
&(\text{by \eqref{P-otimes_S-Q-def}})\\
&=
\Bb[x]\otimes_S\Bb[x'].
&(\text{by Lemma \ref{bond-row-induced-map}})
\end{align*}
Thus $\Bb$ is multiplicative.
\end{proof}

\begin{rem} \label{multiplicative-bond-categorical-role}
Definition \ref{multiplicative-bond-def} formulates multiplicativity intrinsically: the row map
\[
\be_{\Bb}\colon X\to\Int(S)
\]
is required to be a semigroup homomorphism for the transported intent-side multiplication. Proposition \ref{multiplicative-bond-equivalence} shows that this row-wise condition is equivalent to preservation of multiplication by the induced map
\[
\Bb_*\colon\MR\to\MS.
\]
Since every bond already induces a sup-preserving map, a bond is multiplicative precisely when its induced map is a homomorphism of quantales. Thus multiplicative bonds provide the natural morphism-level lift of the object-level representation.
\end{rem}

\section{The category of residuated relations and the equivalence theorem}

We now define a category
\[
\ResRelMult,
\]
whose objects are residuated relations, and whose morphisms are multiplicative bonds. Its identity and composition are those of the underlying bonds in $\Bond$. The next two propositions show that these bonds are multiplicative, so this prescription is well-defined.

\begin{prop} \label{identity-multiplicative-bond}
Let $R\colon X\rto Y$ be a residuated relation. Then the relation $R$ itself is a multiplicative bond from $R$ to $R$. Moreover, the induced map
\[
R_*\colon \MR\to\MR
\]
is the identity map.
\end{prop}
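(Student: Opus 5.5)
We first check that $R$ is a bond from $R$ to $R$. Its row $R[x]=\uR(\{x\})$ is an $R$-intent, since $\uR\dR\uR=\uR$; its column $R^{-1}[y]=\dR(\{y\})$ is an $R$-extent, since $\dR\uR\dR=\dR$. Both are the standard Galois-connection identities coming from \eqref{uR-dv-dR}.

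Next we show $R_*=1_{\MR}$. By \eqref{bond-induced-map}, $R_*(A)=\dR R^\ua(A)$. Here $R^\ua$, computed with $R$ regarded as a bond, is literally $\uR$, so $R_*(A)=\dR\uR(A)=A$ for every $A\in\MR$.

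For multiplicativity we use Proposition~\ref{multiplicative-bond-equivalence}, which applies because $R$ is now known to be a bond between residuated relations. The identity map on $\MR$ trivially satisfies $R_*(A\otimes_R B)=A\otimes_R B=R_*(A)\otimes_R R_*(B)$. Hence $R$ is multiplicative, that is, $R[xx']=R[x]\otimes_R R[x']$.

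As a sanity check, the row condition can also be verified directly. We have $R[x]\otimes_R R[x']=\uR(\dR\uR(\{x\})*\dR\uR(\{x'\}))$. By Lemma~\ref{dRuR-fix}-style reasoning, $\uR\dR\uR=\uR$ applied to a product equals $\uR\dR\uR(\cdot)$. By \eqref{j-jx-jy} and Proposition~\ref{dRuR-nucleus}, this gives $\uR\dR\uR(\{xx'\})=\uR(\{xx'\})=R[xx']$.

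There is no real obstacle. The only point needing care is to make sure Proposition~\ref{multiplicative-bond-equivalence} is invoked only after the bond property has been established, so the argument is not circular.
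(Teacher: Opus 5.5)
Your proof is correct, but its main line differs from the paper's. The paper treats the bond property and $R_*=1_{\MR}$ as already observed in the construction of $\Bond$. It then checks the row condition directly:
\[
R[x]\otimes_R R[x']=\uR\bigl(\dR\uR(\{x\})*\dR\uR(\{x'\})\bigr)=\uR\dR\uR\bigl(\dR\uR(\{x\})*\dR\uR(\{x'\})\bigr)=\uR\dR\uR(\{xx'\})=R[xx'],
\]
using \eqref{j-jx-jy}. This is exactly your ``sanity check''.

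Your primary argument instead verifies the bond axioms and $R_*=1_{\MR}$ explicitly via the Galois identities. It then gets multiplicativity from the sufficiency half of Proposition~\ref{multiplicative-bond-equivalence}, since the identity map trivially preserves $\otimes_R$. This is legitimate and not circular, because that proposition is proved in the preceding section without any appeal to identity bonds.

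Your route is shorter and shows that multiplicativity of a bond depends only on its induced map. However, it leans on a heavier result: the sufficiency proof of Proposition~\ref{multiplicative-bond-equivalence}, specialized to $\Bb=R$ via Lemma~\ref{bond-row-induced-map}, reduces to essentially the same computation. The paper's direct calculation is self-contained and uses only that $\sj_R$ is a quantic nucleus (Proposition~\ref{dRuR-nucleus}).
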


\begin{proof}
As observed in the construction of $\Bond$, the relation $R$ is the identity
bond from $R$ to itself and $R_*=1_{\MR}$. For $x,x'\in X$, we have
\begin{align*}
R[x]\otimes_R R[x']
&=
\uR(\dR(R[x])*\dR(R[x']))
& (\text{by \eqref{P-otimes_S-Q-def}})\\
&=
\uR(\dR\uR(\{x\})*\dR\uR(\{x'\}))\\
&=
\uR\dR\uR(\dR\uR(\{x\})*\dR\uR(\{x'\}))\\
&=
\uR\dR\uR(\{x\}*\{x'\})
& (\text{by \eqref{j-jx-jy}})\\
&=
\uR(\{xx'\})\\
&=
R[xx'].
\end{align*}
Thus $R$ is multiplicative.
\end{proof}

\begin{prop} \label{composition-multiplicative-bonds}
The composite of two multiplicative bonds is again multiplicative.
\end{prop}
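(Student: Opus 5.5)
The plan is to reduce the statement to the fact that composites of quantale homomorphisms are quantale homomorphisms, using Proposition \ref{multiplicative-bond-equivalence} in both directions. Let $\Bb\colon R\to S$ and $\Bc\colon S\to T$ be multiplicative bonds between residuated relations $R\colon X\rto Y$, $S\colon Z\rto W$ and $T\colon U\rto V$. By the construction of $\Bond$, the composite $\Bc\circ\Bb\colon R\to T$ is a bond, and its induced map is
\[
(\Bc\circ\Bb)_*=\Bc_*\circ\Bb_*.
\]

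First, by the necessity part of Proposition \ref{multiplicative-bond-equivalence}, the maps $\Bb_*\colon\MR\to\MS$ and $\Bc_*\colon\MS\to\sM T$ preserve the multiplications $\otimes_R$, $\otimes_S$ and $\otimes_T$. Hence for all $A,B\in\MR$,
\[
(\Bc\circ\Bb)_*(A\otimes_R B)
=\Bc_*\bigl(\Bb_*(A)\otimes_S\Bb_*(B)\bigr)
=\Bc_*\Bb_*(A)\otimes_T\Bc_*\Bb_*(B).
\]
This is multiplicativity of $(\Bc\circ\Bb)_*$. The sufficiency part of Proposition \ref{multiplicative-bond-equivalence} then shows that $\Bc\circ\Bb$ is multiplicative.

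There is no real obstacle here. The only point to check is that the composite in $\Bond$ really satisfies $(\Bc\circ\Bb)_*=\Bc_*\circ\Bb_*$, and this holds by its definition via Corollary \ref{bonds-sup-maps}.

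As an optional cross-check, one can verify the row condition $(\Bc\circ\Bb)[xx']=(\Bc\circ\Bb)[x]\otimes_T(\Bc\circ\Bb)[x']$ directly. This uses Lemma \ref{bond-row-induced-map}, which gives $(\Bc\circ\Bb)[x]=\uT\Bc_*\Bb_*\dR\uR(\{x\})$, and then the same chain of equalities as above. The argument through Proposition \ref{multiplicative-bond-equivalence} avoids this computation.
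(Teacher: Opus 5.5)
Your proposal is correct and matches the paper's proof: both apply Proposition~\ref{multiplicative-bond-equivalence} in both directions, using that composition in $\Bond$ is defined so that $(\Bc\circ\Bb)_*=\Bc_*\circ\Bb_*$. Your optional direct check of the row condition is also valid, and, as you note, it is unnecessary.
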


\begin{proof}
Let
\[
\Bb\colon R\to S
\qquad\text{and}\qquad
\Bc\colon S\to T
\]
be multiplicative bonds. By Proposition
\ref{multiplicative-bond-equivalence}, both $\Bb_*$ and $\Bc_*$ preserve
quantale multiplication. Hence so does $\Bc_*\circ\Bb_*$. Since the
composition in $\Bond$ satisfies
\[
(\Bc\circ\Bb)_*=\Bc_*\circ\Bb_*,
\]
Proposition \ref{multiplicative-bond-equivalence} again implies that
$\Bc\circ\Bb$ is multiplicative.
\end{proof}

Together with the associativity and unit laws in $\Bond$, these propositions
show that $\ResRelMult$ is a category. Forgetting the specified semigroup
structure on each domain and retaining the underlying bonds defines a
faithful functor
\[
\ResRelMult\to\Bond.
\]
It is not in general an inclusion of a subcategory: the same relation may
carry different semigroup structures on its domain and thereby define
distinct objects of $\ResRelMult$.

Next, we show that the assignment 
\[(\Bb\colon R\to S)\mapsto(\sM \Bb=\Bb_*\colon\MR\to\MS)\]
defines a functor
\[
\sM\colon \ResRelMult\to\Quant.
\]

\begin{prop} \label{M-is-functor}
$\sM\colon \ResRelMult\to\Quant$ is a well-defined functor.
\end{prop}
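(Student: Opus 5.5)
To verify that $\sM$ is a well-defined functor, I will check three things in turn: it sends objects to quantales, it sends morphisms to quantale homomorphisms, and it preserves identities and composition. All three reduce to results already established, so the argument is essentially bookkeeping.

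\textbf{Objects.} For a residuated relation $R\colon X\rto Y$, Proposition~\ref{MR-is-quantale} shows that $\MR$, with $\otimes_R$ and the closed unions $\bigsqcup$, is a quantale. So $\sM R:=\MR$ is an object of $\Quant$.

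\textbf{Morphisms.} Let $\Bb\colon R\to S$ be a multiplicative bond. Proposition~\ref{bond-map-sup} shows that $\Bb_*\colon\MR\to\MS$ preserves arbitrary suprema, where the suprema are those of the concept lattices. These coincide with the quantale suprema described in Proposition~\ref{MR-is-quantale}. Proposition~\ref{multiplicative-bond-equivalence} then shows that $\Bb_*(A\otimes_R B)=\Bb_*(A)\otimes_S\Bb_*(B)$. Hence $\sM\Bb=\Bb_*$ is a homomorphism of quantales.

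\textbf{Functoriality.} The identity on $R$ in $\ResRelMult$ is the relation $R$ itself, and Proposition~\ref{identity-multiplicative-bond} gives $R_*=1_{\MR}$, so identities are preserved. Composition in $\ResRelMult$ is inherited from $\Bond$, and it is well defined there by Proposition~\ref{composition-multiplicative-bonds}. By construction of composition in $\Bond$, we have $(\Bc\circ\Bb)_*=\Bc_*\circ\Bb_*$, so composition is preserved as well.

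The only point needing care is that $\sM$ is well defined on morphisms. A morphism of $\ResRelMult$ is a bond together with specified domain and codomain objects, which carry their semigroup structures. The map $\Bb_*$ depends only on the relation $\Bb$ and on $S$, so no ambiguity arises. The multiplication it must preserve, however, is the one determined by those specified semigroup structures, and Proposition~\ref{multiplicative-bond-equivalence} is stated with respect to exactly these. I therefore do not expect a genuine obstacle.
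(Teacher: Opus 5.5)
Your proposal is correct and follows the paper's proof essentially step for step. You handle objects via Proposition~\ref{MR-is-quantale}, morphisms via Propositions~\ref{bond-map-sup} and \ref{multiplicative-bond-equivalence}, identities via Proposition~\ref{identity-multiplicative-bond}, and composition via the identity $(\Bc\circ\Bb)_*=\Bc_*\circ\Bb_*$ inherited from $\Bond$.
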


\begin{proof}
By Proposition \ref{MR-is-quantale}, $\sM$ is well-defined on objects. For
a multiplicative bond $\Bb\colon R\to S$, Proposition \ref{bond-map-sup} and
Proposition \ref{multiplicative-bond-equivalence} show that
\[
\Bb_*\colon\MR\to\MS
\]
is a quantale homomorphism. Thus $\sM$ is well-defined on morphisms.
Moreover, Proposition \ref{identity-multiplicative-bond} gives
\[
\sM(1_R)=R_*=1_{\MR},
\]
and the composition law in $\Bond$ gives
\[
\sM(\Bc\circ\Bb)
=
(\Bc\circ\Bb)_*
=
\Bc_*\circ\Bb_*
=
\sM\Bc\circ\sM\Bb.
\]
Therefore $\sM$ preserves identities and composition.
\end{proof}

\begin{rem} \label{residuated-relations-category-role}
The functor
\[
\sM\colon \ResRelMult\to\Quant
\]
records the passage from residuated relations to their associated concept
quantales. It sends a residuated relation to its concept quantale and a
multiplicative bond to the quantale homomorphism induced by that bond.
\end{rem}

\begin{thm} \label{main-equivalence-theorem}
$\sM\colon \ResRelMult\to\Quant$ is an equivalence of categories.
\end{thm}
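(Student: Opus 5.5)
We will verify that $\sM$ is fully faithful and essentially surjective. This suffices, assuming choice as usual; alternatively, a pseudo-inverse can be written down explicitly from the constructions below.

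\textbf{Essential surjectivity.} Let $Q$ be a quantale. The order relation ${\leq}\colon Q\rto Q$ is a residuated relation, with the semigroup structure on the domain given by the multiplication $*$ of $Q$ and with residuals $\lda$ and $\rda$ from \eqref{quantale-lda-rda}. By the final assertion of Theorem~\ref{sup-inf-dense-criterion}, $\sM(Q,Q,\leq)\cong Q$ as quantales. Explicitly, $\Phi(A)=\bv A$ gives the isomorphism, with inverse $q\mapsto\{x\mid x\leq q\}$.

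\textbf{Faithfulness.} Let $\Bb,\Bb'\colon R\to S$ be multiplicative bonds with $\Bb_*=\Bb'_*$. Corollary~\ref{bonds-sup-maps} states that a bond is uniquely determined by its induced sup-preserving map, via the recovery formula \eqref{bond-from-join-map}. Hence $\Bb=\Bb'$. This holds because morphisms in $\ResRelMult$ are bonds between the underlying relations, so the semigroup structure plays no role here.

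\textbf{Fullness.} Let $h\colon\MR\to\MS$ be a quantale homomorphism. Since $h$ preserves suprema, Proposition~\ref{sup-map-bond} produces a bond $\Bb_h\colon R\to S$ with $(\Bb_h)_*=h$. Since $h$ also preserves multiplication, the sufficiency direction of Proposition~\ref{multiplicative-bond-equivalence} shows that $\Bb_h$ is multiplicative. Therefore $\Bb_h$ is a morphism of $\ResRelMult$, and $\sM\Bb_h=h$.

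I expect none of these steps to be a real obstacle, since all the work has already been done in Theorem~\ref{sup-inf-dense-criterion} and Propositions~\ref{sup-map-bond} and~\ref{multiplicative-bond-equivalence}. The one point needing care is that the morphism sets of $\ResRelMult$ are exactly the multiplicative bonds between the underlying relations. Objects that share an underlying relation but carry different semigroup structures are distinct objects, but this does not affect the hom-set argument, because multiplicativity is checked relative to the fixed semigroup structures on $R$ and $S$. It also helps to note that the quantale structure on $\MR$ depends only on the semigroup multiplication and not on the choice of residual maps $\lda$ and $\rda$. Finally, if one wants to avoid choice, one can define the pseudo-inverse on objects by $Q\mapsto(Q,Q,\leq)$ and on morphisms by $f\mapsto\Bb_{h}$ with $h=\Phi_{Q'}^{-1}\circ f\circ\Phi_Q$. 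The natural isomorphisms are then $\Phi$ on one side and, on the other side, the bonds corresponding under Corollary~\ref{bonds-sup-maps} to the isomorphisms $\MR\cong\sM(\MR,\MR,\leq)$.
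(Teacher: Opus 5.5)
Your proposal is correct and is essentially the paper's proof. Full faithfulness comes from the bond/sup-map bijection of Corollary~\ref{bonds-sup-maps} combined with Proposition~\ref{multiplicative-bond-equivalence} (you spell out faithfulness and fullness separately, via Proposition~\ref{sup-map-bond}), and essential surjectivity comes from the final assertion of Theorem~\ref{sup-inf-dense-criterion}; your explicit pseudo-inverse $Q\mapsto(Q,Q,\leq)$ is a correct extra that the paper does not need.
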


\begin{proof}
For all objects $R,S$ of $\ResRelMult$, Corollary
\ref{bonds-sup-maps} and Proposition
\ref{multiplicative-bond-equivalence} yield a bijection
\[
\ResRelMult(R,S)\to\Quant(\MR,\MS),
\qquad
\Bb\mapsto\Bb_*.
\]
Thus $\sM$ is full and faithful. Theorem
\ref{sup-inf-dense-criterion} shows that every quantale is isomorphic to
$\MR$ for some object $R$ of $\ResRelMult$, so $\sM$ is essentially
surjective. Hence $\sM$ is an equivalence of categories.
\end{proof}



\begin{rem} \label{equivalence-theorem-meaning}
The equivalence above is the natural morphism-level completion of the
object-level representation theorem. Theorem
\ref{sup-inf-dense-criterion} says that every quantale can be represented as
the concept lattice of a residuated relation, and characterizes such
representations by sup-dense and inf-dense maps. Theorem
\ref{main-equivalence-theorem} identifies the homomorphisms between the
represented quantales precisely with the maps induced by multiplicative bonds.
Thus residuated relations and multiplicative bonds provide a categorical
presentation of quantales.
\end{rem}

\section{The semigroup-object interpretation}
\label{section-semigroup-objects}

The category $\Bond$ carries more structure than its equivalence with $\Sup$
alone records. Mori established a lattice tensor on $\Bond$, denoted here by
$\boxtimes$, and a strong monoidal equivalence
\[
(\Bond,\boxtimes)
\simeq_{\otimes}
(\Sup,\boxtimes),
\]
where the tensor on the right is the standard tensor product of complete
sup-lattices \cite{Mori2008}; see also \cite[Theorem~27]{Tull2020}. With the non-unital convention adopted in this paper, a quantale is a \emph{semigroup object} \cite{MacLane1998} in $(\Sup,\boxtimes)$, while a unital
quantale is a monoid object. For a monoidal category $(\CC,\boxtimes)$, we
write $\SGrp(\CC,\boxtimes)$ for its category of semigroup objects.

Via the canonical isomorphism of categories between $\Bond$ and
$\BB(\Rel)$ (see Remark \ref{Bond-BRel}), we transport Mori's
monoidal structure from $\Bond$ to $\BB(\Rel)$. Equipped with the transported
tensor $\boxtimes$, this isomorphism is strong monoidal by construction.

\begin{prop}
\label{residuated-relations-semigroup-objects}
There are equivalences of categories
\[
\ResRelMult
\simeq
\Quant
\simeq
\SGrp(\Bond,\boxtimes)
\simeq
\SGrp(\BB(\Rel),\boxtimes).
\]
\end{prop}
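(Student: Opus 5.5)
The chain will be assembled link by link, using only Theorem~\ref{main-equivalence-theorem} and standard facts about monoidal categories. The first link, $\ResRelMult\simeq\Quant$, is exactly Theorem~\ref{main-equivalence-theorem}, realized by the functor $\sM$.

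For the second link, I will first make precise the observation that, with our non-unital conventions, $\Quant$ is isomorphic to $\SGrp(\Sup,\boxtimes)$. Sup-preserving maps $Q\boxtimes Q\to Q$ correspond bijectively to maps $Q\times Q\to Q$ preserving suprema in each variable, by the universal property of the sup-lattice tensor. Associativity of such a multiplication is equivalent to commutativity of the semigroup-object associativity square, since the elementary tensors $p\boxtimes q\boxtimes r$ generate $Q\boxtimes Q\boxtimes Q$ under suprema. Likewise, a sup-map $f\colon P\to Q$ is a semigroup-object morphism exactly when $f(p*p')=f(p)*f(p')$. Hence this identification is an isomorphism of categories.

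Next I will invoke the general fact that a strong monoidal equivalence $F\colon(\CC,\boxtimes)\to(\mathcal D,\boxtimes)$ induces an equivalence $\SGrp(\CC,\boxtimes)\simeq\SGrp(\mathcal D,\boxtimes)$. The functor sends a semigroup $(C,m)$ to $(FC,\,Fm\circ\phi_{C,C})$, where $\phi$ is the monoidal structure isomorphism. The quasi-inverse can be taken strong monoidal, since the inverse of a strong monoidal equivalence is again strong monoidal with respect to the transported structure. The unit and counit are monoidal natural isomorphisms, so they lift to isomorphisms of semigroup objects. Fully faithfulness follows because a morphism $FC\to FC'$ is multiplicative iff its preimage is, using that $\phi$ is natural and invertible. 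Applying this fact to Mori's strong monoidal equivalence $(\Bond,\boxtimes)\simeq_{\otimes}(\Sup,\boxtimes)$ gives $\SGrp(\Bond,\boxtimes)\simeq\SGrp(\Sup,\boxtimes)\cong\Quant$. Applying it to the strong monoidal isomorphism $\Bond\cong\BB(\Rel)$ of Remark~\ref{Bond-BRel}, with the transported tensor, gives the last link, which is in fact an isomorphism. Composing yields the stated chain.

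The main obstacle is the bookkeeping in lifting a strong monoidal equivalence to semigroup objects. In particular one must check that Mori's equivalence is strong monoidal in the direction needed, or choose a monoidal quasi-inverse, and verify that the structure isomorphisms are compatible with the associativity of the induced multiplication. These are the coherence axioms for $\phi$, and I will check them through the standard hexagon for $\phi$ together with the associators. Everything else reduces to the universal property of $\boxtimes$ in $\Sup$.
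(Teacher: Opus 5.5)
Your proposal is correct and follows the same route as the paper: Theorem~\ref{main-equivalence-theorem} gives the first link. Lifting Mori's strong monoidal equivalence $(\Bond,\boxtimes)\simeq(\Sup,\boxtimes)$ to semigroup objects, together with $\SGrp(\Sup,\boxtimes)=\Quant$, gives the second. The transported strong monoidal isomorphism $\Bond\cong\BB(\Rel)$ gives the third. The paper only asserts the two standard facts used here, namely that semigroup objects in $\Sup$ are quantales and that a strong monoidal equivalence lifts to semigroup objects; you spell them out and also correctly note that the last link is an isomorphism.
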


\begin{proof}
Theorem~\ref{main-equivalence-theorem} gives the first equivalence. The strong
monoidal equivalence above induces an equivalence between the corresponding
categories of semigroup objects. The semigroup objects of $(\Sup,\boxtimes)$ are precisely
the non-unital quantales. Hence
\[
\SGrp(\Bond,\boxtimes)
\simeq
\SGrp(\Sup,\boxtimes)
=
\Quant.
\]
Finally, the transported strong monoidal isomorphism
\[
(\Bond,\boxtimes)
\cong_{\otimes}
(\BB(\Rel),\boxtimes)
\]
induces an equivalence
\[
\SGrp(\Bond,\boxtimes)
\simeq
\SGrp(\BB(\Rel),\boxtimes),
\]
which gives the final equivalence.
\end{proof}

This proposition is a categorical interpretation of the main theorem, rather
than a replacement for its relation-level proof. An object of
$\ResRelMult$ is a concrete residuated presentation
\[
R\colon X\rto Y
\]
whose object set $X$ carries a semigroup structure; its attribute set $Y$ is
only a set. The residual equations use the multiplication on $X$ to make the
Galois closure
\[
\sj_R=\dR\uR
\]
into a quantic nucleus, and hence to make the concept lattice $\MR$ into a
quantale. The row condition on a multiplicative bond then says exactly that
the induced sup-lattice map preserves this descended multiplication.

By contrast, a semigroup object of $(\Bond,\boxtimes)$ is an internal
multiplication in the bond category. The proposition says that, up to the
displayed categorical equivalences, these internal semigroup objects are
represented by the concrete residuated presentations described in this paper.
It does not say that an arbitrary internal semigroup object reconstructs, on
its original
object and attribute sets, a semigroup operation and residual maps of the
displayed form. This distinction is the value of the relation-level
description: the residual equations and the row criterion supply explicit
object--attribute data whose Galois quotient realizes the abstract internal
multiplication.

\section{Pseudogroup completions and \'{e}tale groupoids}
\label{section-pseudogroup-completions}

Let $S$ be an \emph{abstract complete pseudogroup}
(see \cite[Definitions~2.8 and~2.9]{Resende2007}), namely a complete and
infinitely distributive \emph{inverse semigroup} \cite{Clifford1961,Paterson1999}. For $s\in S$, write $s^{-1}$
for its unique inverse. The natural order on $S$ is given by
\[
s\leq t
\iff
s=et\quad\text{for some idempotent }e\in S.
\]
A subset of $S$ is \emph{compatible} if $s^{-1}t$ and $st^{-1}$ are
idempotent for all $s,t$ in the subset. Completeness means that
every compatible subset, including the empty subset, has a supremum; in
particular, $S$ is an inverse monoid. Infinite distributivity means that
multiplication distributes over these suprema. The construction below realizes
the enveloping quantale of $S$ as a concept quantale, thereby giving an FCA
realization of the pseudogroup completion (see
\cite[Definition~3.22 and Lemma~3.23]{Resende2007}). Let
$\mathcal L^\vee(S)$ be the set of compatibly closed ideals of $S$, namely
the subsets that are downward closed in the natural order and closed under
compatible suprema. For
$A\subseteq S$, put
\[
\operatorname{cl}_S(A)
=
\bigcap\{U\in\mathcal L^\vee(S)\mid A\subseteq U\}.
\]
For $U,V\in\mathcal L^\vee(S)$, write
\[
U\star V=\operatorname{cl}_S(U*V),
\qquad
U^\dagger=\{u^{-1}\mid u\in U\}.
\]
Thus the dagger on $\mathcal L^\vee(S)$ denotes the pointwise extension of
the inverse operation on $S$.

Recall that an \emph{involution} on a quantale $Q$ is a sup-preserving map
$a\mapsto a^\dagger$ such that
$a^{\dagger\dagger}=a$ and
$(a*b)^\dagger=b^\dagger*a^\dagger$ for all $a,b\in Q$. A
\emph{support} on a unital involutive quantale $Q$, with unit $e$, is a
sup-preserving map $\varsigma\colon Q\to Q$ satisfying
\[
\varsigma(a)\leq e,
\qquad
\varsigma(a)\leq a*a^\dagger,
\qquad
a\leq\varsigma(a)*a.
\]
A \emph{frame} is a complete lattice in which finite meets distribute over
arbitrary suprema. A \emph{supported quantal frame} is a unital involutive
quantale whose underlying complete lattice is a frame and which is equipped
with a support. An \emph{inverse quantal frame} is a supported quantal frame
whose top element is the supremum of its partial units, where $u$ is a partial unit when
$u*u^\dagger\leq e$ and $u^\dagger*u\leq e$
(see \cite[Definition~4.10]{Resende2007}).

Then $\mathcal L^\vee(S)$ is a unital involutive quantal frame, with unit
$\downarrow 1_S$ and multiplication $\star$
(see \cite[Lemma~3.23]{Resende2007}).

\begin{defn}
\label{pseudogroup-membership-context}
With $S$ as above, the \emph{pseudogroup membership context} of $S$ is
\[
R_S\colon S\rto\mathcal L^\vee(S),
\qquad
sR_SU\iff s\in U,
\]
with object semigroup $S$ and attribute set $\mathcal L^\vee(S)$.
\end{defn}

This is the residual-stable membership construction of
Example~\ref{example-residual-stable-tests} with
$\mathcal T=\mathcal L^\vee(S)$; the next proposition verifies the
required residual stability in this case.

\begin{prop}
\label{pseudogroup-membership-is-residuated}
The relation $R_S$ is residuated. Its residual operations are
\[
U\lda t=\{s\in S\mid st\in U\},
\qquad
s\rda U=\{t\in S\mid st\in U\}.
\]
Moreover,
\[
sR_SU\iff s^{-1}R_SU^\dagger .
\]
\end{prop}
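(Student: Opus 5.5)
The plan is to verify directly that the displayed residual maps take values in $\mathcal L^\vee(S)$. Once that is done, the residuation equivalences \eqref{residuated-relation-iff} are immediate from the definitions: $st\,R_S\,U$ means $st\in U$, which is equivalent to $s\in U\lda t$ and to $t\in s\rda U$. The involution statement likewise only needs $U^\dagger\in\mathcal L^\vee(S)$, since then $s\in U\iff s^{-1}\in U^\dagger$ holds because $(s^{-1})^{-1}=s$.

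\textbf{Step 1: $U\lda t$ and $s\rda U$ are compatibly closed ideals.}

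For downward closure, take $s'\leq s$ with $s\in U\lda t$. Write $s'=es$ with $e$ idempotent. Then $s't=e(st)\leq st$, so $s't\in U$ because $U$ is a down-set. For $s\rda U$, I use the other form of the natural order: in an inverse semigroup $t'\leq t$ iff $t'=tf$ for some idempotent $f$. Then $st'=(st)f\leq st$.

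For closure under compatible suprema, let $C\subseteq U\lda t$ be compatible. I need two facts.
\begin{itemize}
\item $\{ct\mid c\in C\}$ is again compatible.
\item $(\bv C)t=\bv_{c\in C}ct$, which is infinite distributivity.
\end{itemize}
Given these, $(\bv C)t$ is a compatible supremum of elements of $U$, so it lies in $U$. The empty case is included here: $\bv\emptyset=0$, so I need $0\in U$ and $0t=0$. This also follows from distributivity over the empty family.

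The main obstacle is the first fact, that right (and left) multiplication by $t$ preserves compatibility. For $c,d\in C$:
\[
(ct)^{-1}(dt)=t^{-1}(c^{-1}d)t,
\]
and conjugating an idempotent gives an idempotent in an inverse semigroup. On the other side,
\[
(ct)(dt)^{-1}=c\,(tt^{-1})\,d^{-1}.
\]
Since idempotents commute, $c\,tt^{-1}\,d^{-1}=(c\,tt^{-1})(d\,tt^{-1})^{-1}$, and $c\,tt^{-1}\leq c$. Elements below a compatible pair are compatible, so this is idempotent. I would cite the standard inverse-semigroup facts, or Resende's lemmas, for conjugation and for compatibility passing to smaller elements. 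The argument for $s\rda U$ is symmetric.

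\textbf{Step 2: $U^\dagger\in\mathcal L^\vee(S)$.}

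Inversion is an order automorphism of the natural order. It also preserves compatibility, since $(s^{-1})^{-1}t^{-1}=st^{-1}$ and $s^{-1}(t^{-1})^{-1}=s^{-1}t$. Therefore inversion preserves compatible suprema, and $U^\dagger$ inherits downward closure and closure under compatible suprema from $U$.
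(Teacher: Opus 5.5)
Your proposal is correct and follows the same route as the paper: you first show that $U\lda t$ and $s\rda U$ are down-sets closed under compatible suprema, using that translations preserve compatibility together with infinite distributivity, and the residuation equivalences and the $U^\dagger$ statement then follow at once. You also supply details the paper only asserts, namely why translates of a compatible set stay compatible, the empty-supremum case, and why $U^\dagger\in\mathcal L^\vee(S)$.
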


\begin{proof}
For $U\in\mathcal L^\vee(S)$ and $t\in S$, the set
$U\lda t$ is downward closed. If $E\subseteq U\lda t$ is compatible, then
$Et$ is compatible and
\[
(\bigvee E)t=\bigvee(Et)\in U.
\]
Thus $U\lda t$ is compatibly closed. Similarly, if
$E\subseteq s\rda U$ is compatible, then $sE$ is compatible and
\[
s(\bigvee E)=\bigvee(sE)\in U,
\]
so $s\rda U\in\mathcal L^\vee(S)$. The defining equivalences are
\[
(st)R_SU
\Longleftrightarrow st\in U
\Longleftrightarrow s\in U\lda t
\Longleftrightarrow sR_S(U\lda t)
\]
and, symmetrically, $(st)R_SU\Longleftrightarrow tR_S(s\rda U)$.
Finally, $U^\dagger\in\mathcal L^\vee(S)$ and
$s\in U$ if and only if $s^{-1}\in U^\dagger$.
\end{proof}

\begin{prop}
\label{pseudogroup-concept-quantale}
For every $A\subseteq S$,
\[
\dR_S\uR_S(A)=\operatorname{cl}_S(A).
\]
Consequently, the extents of $R_S$ are precisely the compatibly closed
ideals, and
\[
\MR_S\cong\mathcal L^\vee(S)
\]
as unital involutive quantales.
\end{prop}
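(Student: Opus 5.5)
The plan has three parts: compute the closure $\dR_S\uR_S$ directly, read off the quantale isomorphism from Proposition~\ref{MR-is-quantale}, and then check that the unit and involution match.

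\textbf{The closure formula.} By definition of $R_S$,
\[
\uR_S(A)=\{U\in\mathcal L^\vee(S)\mid A\subseteq U\}.
\]
By Lemma~\ref{dRuR-fix}, $s\in\dR_S\uR_S(A)$ if and only if $s$ lies in every $U\in\mathcal L^\vee(S)$ with $A\subseteq U$. This is exactly membership in $\operatorname{cl}_S(A)$, so the formula is immediate from the definitions. For the description of the extents, I first note that $\mathcal L^\vee(S)$ is closed under arbitrary intersections. An intersection of downward closed sets is downward closed. If a compatible subset lies in every member of a family, its supremum also lies in every member. Hence $\operatorname{cl}_S(A)\in\mathcal L^\vee(S)$, and $\operatorname{cl}_S(U)=U$ for every $U\in\mathcal L^\vee(S)$. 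Therefore $\operatorname{Ext}(R_S)=\mathcal L^\vee(S)$ as sets, both ordered by inclusion.

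\textbf{The quantale isomorphism.} Proposition~\ref{pseudogroup-membership-is-residuated} shows that $R_S$ is residuated, so Proposition~\ref{MR-is-quantale} applies. It gives the multiplication $U\otimes V=\dR_S\uR_S(U*V)=\operatorname{cl}_S(U*V)=U\star V$ and the suprema $\operatorname{cl}_S(\bigcup U_i)$. These coincide with the multiplication and suprema of $\mathcal L^\vee(S)$ from \cite[Lemma~3.23]{Resende2007}. So the identity map $\MR_S\to\mathcal L^\vee(S)$ is an isomorphism of quantales.

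\textbf{Unit and involution.} It remains to check that $\downarrow 1_S$ is a unit for $\otimes$ and that the involution $U\mapsto U^\dagger$ is respected. Since the underlying sets and operations are literally equal, this transfers directly from Resende's lemma. I would still verify it for completeness.
\begin{itemize}
\item $U^\dagger\in\mathcal L^\vee(S)$: inversion preserves the natural order and compatibility, and it sends compatible suprema to compatible suprema.
\item $U\star\downarrow 1_S=U$: the product $U*\downarrow 1_S$ consists of elements $ue$ with $e$ idempotent. These lie below $u$, and the product contains $U$ itself, so its closure is $U$.
\end{itemize}

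The only point needing genuine care is closure of $\mathcal L^\vee(S)$ under intersections, together with the well-definedness of the involution. The main obstacle is the latter: one must show that inversion maps compatible sets to compatible sets and preserves their joins. I would get this from the identities $(s^{-1})^{-1}t^{-1}=st^{-1}$ and $s^{-1}(t^{-1})^{-1}=s^{-1}t$, together with the fact that inversion is an order automorphism of the natural order.
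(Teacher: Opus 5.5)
Your proposal is correct and follows the paper's own argument. You compute $R_S^{\uparrow}(A)$ as the set of compatibly closed ideals containing $A$, identify the Galois closure with $\operatorname{cl}_S$, read off $\otimes_{R_S}=\star$ from Proposition~\ref{MR-is-quantale}, and transport the unit and involution along the identity map. Your explicit check that $\mathcal L^\vee(S)$ is closed under arbitrary intersections supplies a step the paper leaves implicit: it shows that $\operatorname{cl}_S(A)$ is itself compatibly closed, so the extents are exactly the members of $\mathcal L^\vee(S)$.
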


\begin{proof}
One has
\[
\uR_S(A)=\{U\in\mathcal L^\vee(S)\mid A\subseteq U\};
\]
hence its lower transform is exactly $\operatorname{cl}_S(A)$. Thus the
extents are the members of $\mathcal L^\vee(S)$. For closed ideals $U,V$,
\[
U\otimes_{R_S}V
=
\dR_S\uR_S(U*V)
=
\operatorname{cl}_S(U*V)
=
U\star V.
\]
The unit $\downarrow 1_S$ and the involution $U\mapsto U^\dagger$ are
therefore those of $\mathcal L^\vee(S)$, which proves the assertion.
\end{proof}

\begin{cor}
\label{pseudogroup-etale-groupoid}
For every abstract complete pseudogroup $S$, the concept quantale
$\MR_S$ is an inverse quantal frame. Hence there is a localic \'{e}tale
groupoid $G_S$ for which
\[
\MR_S\cong\mathcal O(G_S)
\]
as unital involutive quantales, where $\mathcal O(G_S)$ denotes the quantale
of opens of the arrow locale of $G_S$.
\end{cor}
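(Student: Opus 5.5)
By Proposition~\ref{pseudogroup-concept-quantale}, $\MR_S\cong\mathcal L^\vee(S)$ as unital involutive quantales, so it suffices to show that $\mathcal L^\vee(S)$ is an inverse quantal frame. The étale groupoid is then supplied by Resende's correspondence between inverse quantal frames and localic étale groupoids \cite{Resende2007}, which gives $G_S$ with $\mathcal L^\vee(S)\cong\mathcal O(G_S)$. Composing the two isomorphisms yields the stated isomorphism with $\MR_S$.

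The core step is therefore to verify directly that $\mathcal L^\vee(S)$ is an inverse quantal frame.

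\textbf{Frame.} Finite meets in $\mathcal L^\vee(S)$ are intersections. For $U\cap\operatorname{cl}_S(\bigcup_i V_i)$, I would use that $\operatorname{cl}_S$ of a down-set is obtained by iteratively adjoining joins of compatible subsets. Infinite distributivity of $S$ then gives $u=u^{-1}u\,u$ with $u\leq\bigvee E$, hence $u=\bigvee\{(u^{-1}u)e\mid e\in E\}$, and each $(u^{-1}u)e$ lies in both $U$ and the relevant $V_i$. Alternatively, this is part of \cite[Lemma~3.23]{Resende2007} and may simply be cited.

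\textbf{Support.} Set $\varsigma(U)=\operatorname{cl}_S\{uu^{-1}\mid u\in U\}$. This map preserves suprema because it is the closure of a union. Each $uu^{-1}$ is idempotent, so $uu^{-1}\leq1_S$ and hence $\varsigma(U)\leq{\downarrow}1_S$. Since $uu^{-1}\in U*U^\dagger$, we get $\varsigma(U)\leq U\star U^\dagger$. Since $u=uu^{-1}u$, we get $U\leq\varsigma(U)\star U$.

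\textbf{Partial units and the top.} For each $s\in S$, the principal ideal ${\downarrow}s$ is compatibly closed. We have ${\downarrow}s\star({\downarrow}s)^\dagger\leq{\downarrow}(ss^{-1})\leq{\downarrow}1_S$, and similarly on the other side, so ${\downarrow}s$ is a partial unit. The top element $S$ equals $\bigcup_s{\downarrow}s$, hence it is the supremum of partial units.

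The main obstacle is the frame distributivity and the bookkeeping of $\operatorname{cl}_S$ under products. I would first try to cite Resende's results directly, namely that $\mathcal L^\vee(S)$ is an inverse quantal frame (his pseudogroup-completion theorem), rather than reprove them.
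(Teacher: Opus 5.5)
Your proposal is correct and is essentially the paper's proof: the paper likewise transports along $\MR_S\cong\mathcal L^\vee(S)$ from Proposition~\ref{pseudogroup-concept-quantale}. It then cites Resende for $\mathcal L^\vee(S)$ being an inverse quantal frame, and invokes his reconstruction and recovery theorems to obtain $G_S$. If you keep the optional direct verification, it needs repairs: the identity is $u=uu^{-1}u$, so $u=\bigvee_{e\in E}(uu^{-1})e$ rather than $(u^{-1}u)e$; the frame step needs an explicit induction over the stages of $\operatorname{cl}_S$; and sup-preservation of $\varsigma$ does not follow from ``closure of a union'' alone but needs $(\bigvee E)(\bigvee E)^{-1}=\bigvee_{e\in E}ee^{-1}$ for compatible $E$, since $\varsigma(\operatorname{cl}_S(\bigcup_i U_i))$ involves elements outside $\bigcup_i U_i$.
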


\begin{proof}
Proposition~\ref{pseudogroup-concept-quantale} identifies $\MR_S$ with
$\mathcal L^\vee(S)$. The latter is an inverse quantal frame
(see \cite[Definition~3.16, Lemma~3.23 and Definition~4.10]{Resende2007}), and the groupoid
statement follows from the inverse-quantal-frame reconstruction and recovery
results (see \cite[Theorems~4.19 and~5.11]{Resende2007}).
\end{proof}

Here a homomorphism of abstract complete pseudogroups means a monoid
homomorphism that preserves suprema of compatible subsets. Let
$f\colon S\to T$ be such a homomorphism.
Its universal extension
\[
\mathcal L^\vee(f)\colon\mathcal L^\vee(S)\to\mathcal L^\vee(T),
\qquad
U\mapsto\operatorname{cl}_T(f[U])
\]
is a unital involutive quantale homomorphism
(see \cite[Theorem~3.25 and Corollary~3.26]{Resende2007}). The associated multiplicative bond
\[
\Bb_f\colon R_S\to R_T
\]
has the particularly transparent form
\[
s\,\Bb_f\,V
\iff
f(s)\in V,
\qquad
(s\in S,\;V\in\mathcal L^\vee(T)).
\]
Indeed,
\[
\dR_S\uR_S(\{s\})=\downarrow s
\qquad\text{and}\qquad
\mathcal L^\vee(f)(\downarrow s)=\downarrow f(s).
\]
Since $V$ is downward closed, the bond obtained from
$\mathcal L^\vee(f)$ by Proposition~\ref{sup-map-bond} has precisely the
displayed incidence relation. It is multiplicative by
Proposition~\ref{multiplicative-bond-equivalence}.


\begin{appendices}

\section{Examples of residuated relations}
\label{appendix-examples}

The examples below illustrate four recurrent sources of residuated relations:
order comparisons, membership in residual-stable families of subsets,
quantale-valued observations, and equations determined by multiplicative
invariants. In each case we display the residual maps and identify the
resulting concept quantale. Some examples are special cases of more general
ones, but are retained because the residual operations have familiar
interpretations in their respective settings.

\subsection{Order-induced relations}

\begin{exmp}[Dense ordered presentations]
\label{example-sup-inf-dense-ordered-presentations}
Let $Q$ be a quantale, and let $A\subseteq Q$ be a subsemigroup that is both
sup-dense and inf-dense in $Q$. Suppose that $A$, with the order inherited
from $Q$, is a residuated ordered semigroup. Then the restricted order
relation
\[
{\leq}\ \colon A\rto A
\]
is residuated. The inclusion $A\to Q$ is at once a sup-dense semigroup
homomorphism and an inf-dense map, so
Theorem~\ref{sup-inf-dense-criterion} gives
\[
\sM(A,A,\leq)\cong Q
\]
as quantales.

For example, take $Q=([0,1],\wedge)$ and
$A=\bbQ\cap[0,1]$. The residual in $A$ is
\[
a\ra b=
\begin{cases}
1 & \text{if }a\leq b,\\
b & \text{if }b<a.
\end{cases}
\]
Thus the rational order context presents the quantale
$([0,1],\wedge)$.

As a non-idempotent example, consider the Lawvere quantale
$Q=([0,\infty],\geq,+)$ \cite{Lawvere1973}, ordered oppositely to the usual
order, and put $A=\bbQ_{\geq0}\cup\{\infty\}$. This is a sup-dense and
inf-dense subsemigroup. Its residual is truncated subtraction:
\[
b\ominus a=
\begin{cases}
0 & \text{if }b\leq a,\\
b-a & \text{if }a<b<\infty,\\
\infty & \text{if }b=\infty\text{ and }a<\infty,
\end{cases}
\]
where the displayed inequalities use the usual order. Hence
$(A,A,\geq)$ presents $([0,\infty],\geq,+)$.
\end{exmp}

\begin{exmp}[A frame from a meet-semilattice basis]
\label{example-locales-basis-tests}
Let $L$ be a frame and let $B\subseteq L$ be a meet-semilattice basis \cite{Johnstone1986}.
Regard $B$ as a semigroup under finite meet, and define
\[
R\colon B\rto L,
\qquad
bRu
\iff
b\leq u.
\]
If $\ra$ denotes the Heyting implication of $L$, put
\[
u\lda b'=b'\ra u,
\qquad
b\rda u=b\ra u.
\]
Then
\[
b\wedge b'\leq u
\iff
b\leq b'\ra u
\iff
b'\leq b\ra u,
\]
so $R$ is residuated. The inclusion $B\to L$ is sup-dense, while
$1_L\colon L\to L$ is inf-dense. Therefore
\[
\sM R\cong L
\]
as quantales. Thus a frame can be presented by its basic elements as objects
and all its elements as attributes.
\end{exmp}

\begin{exmp}[Quantic nuclei as restricted attributes]
\label{example-nuclei-test-restrictions}
Let $Q$ be a quantale, let $\sj$ be a quantic nucleus on $Q$, and let
$f\colon X\to Q$ be a sup-dense semigroup homomorphism. Take the fixed-point
set $Q_{\sj}$ as the attribute set and define
\[
R_{\sj}\colon X\rto Q_{\sj},
\qquad
xR_{\sj}y
\iff
f(x)\leq y.
\]
For $y\in Q_{\sj}$ and $x,x'\in X$, put
\[
y\lda x'=y\lda f(x'),
\qquad
x\rda y=f(x)\rda y,
\]
where the operations on the right are the residuals in $Q$. These elements
belong to $Q_{\sj}$. Indeed, if $c=y\lda f(x')$, then
\[
\sj(c)*f(x')
\leq
\sj(c)*\sj f(x')
\leq
\sj(c*f(x'))
\leq
\sj(y)
=
y,
\]
and hence $\sj(c)\leq c$; the reverse inequality is automatic. The other
residual is treated similarly. It follows that $R_{\sj}$ is residuated.

For every $A\subseteq X$, its Galois closure is
\[
\dR_{\sj}\uR_{\sj}(A)
=
\left\{
x\in X
\mathrel{\Big|}
f(x)\leq\sj\Big(\bv f[A]\Big)
\right\}.
\]
The map $x\mapsto\sj f(x)$ is a sup-dense semigroup homomorphism
$X\to Q_{\sj}$, and the identity map of $Q_{\sj}$ is inf-dense. Hence
\[
\sM R_{\sj}\cong Q_{\sj}
\]
as quantales. This realizes a quantale quotient by retaining the original
objects and restricting the attributes to the closed elements of the
nucleus.
\end{exmp}

\subsection{Membership relations}

\begin{exmp}[The powerset membership context]
\label{example-powerset-context}
\label{example-groups-elements-subsets}
Let $X$ be a semigroup. The membership relation
\[
R_X\colon X\rto\PX,
\qquad
xR_XA\iff x\in A
\]
is residuated, with
\[
A\lda x'=\{x\in X\mid xx'\in A\},
\qquad
x\rda A=\{x'\in X\mid xx'\in A\}.
\]
Indeed, $(xx')R_XA$ holds precisely when
$x\in A\lda x'$, equivalently when $x'\in x\rda A$. Its Galois closure
is the identity on $\PX$, since every subset is itself an attribute. Hence
\[
\MR_X\cong\PX
\]
as quantales. This is the unclosed limiting case of the membership
construction.
\end{exmp}

\begin{exmp}[Quotient attributes from a semigroup homomorphism]
\label{example-semigroup-quotient-tests}
Let
\[
h\colon X\to M
\]
be a surjective semigroup homomorphism. Put
\[
\mathcal T_h=\{h^{-1}(B)\mid B\subseteq M\}.
\]
For $B\subseteq M$ and $x,x'\in X$, the point residuals are
\[
h^{-1}(B)\lda x'
=
h^{-1}(\{m\in M\mid m h(x')\in B\})
\]
and
\[
x\rda h^{-1}(B)
=
h^{-1}(\{m\in M\mid h(x)m\in B\}).
\]
Thus the displayed residuals belong to $\mathcal T_h$, and the membership
relation
\[
R_h\colon X\rto\mathcal T_h,
\qquad
xR_hh^{-1}(B)
\iff
h(x)\in B
\]
is residuated. Its Galois closure is
\[
\dR_h\uR_h(A)=h^{-1}(h[A])
\]
for every $A\subseteq X$. The extents are therefore exactly the saturated
subsets $h^{-1}(B)$, and
\[
\MR_h\cong\mathsf P M
\]
as quantales. Hence a surjective semigroup homomorphism produces a
formal-concept presentation of the powerset quantale of its quotient
semigroup.
\end{exmp}

\begin{exmp}[Residual-stable families of attributes]
\label{example-residual-stable-tests}
\label{example-residual-stable-membership-tests}
Let $X$ be a semigroup and let $\mathcal T$ be a family of subsets of $X$
that is stable under the point residuals
\[
T\lda x=\{u\in X\mid ux\in T\}\in\mathcal T,
\qquad
x\rda T=\{u\in X\mid xu\in T\}\in\mathcal T
\]
for all $T\in\mathcal T$ and $x\in X$. Then the membership relation
\[
R_{\mathcal T}\colon X\rto\mathcal T,
\qquad
xR_{\mathcal T}T\iff x\in T
\]
is residuated. Its Galois closure
\[
\sj_{\mathcal T}:=\dR_{\mathcal T}\uR_{\mathcal T}
\]
is
\[
\sj_{\mathcal T}(A)
=
\bigcap\{T\in\mathcal T\mid A\subseteq T\},
\]
where the empty intersection is $X$. Proposition~\ref{dRuR-nucleus}
therefore says that $\sj_{\mathcal T}$ is a quantic nucleus, and its fixed
points form a quantale.

This construction allows one to choose a proper family of attributes rather
than all subsets of $X$. Residual stability ensures that
the resulting closure respects multiplication. The pseudogroup membership
context of Section~\ref{section-pseudogroup-completions} is an instance of
this mechanism.
\end{exmp}

\begin{rem}[Formal languages and trace specifications]
\label{example-languages-traces}
\label{example-regular-language-tests}
For the free monoid $X=\Sigma^*$, the residuals in the powerset membership
context are the familiar right and left language quotients
\cite{Eilenberg1974}:
\[
L\lda v
=
L/v
=
\{u\in\Sigma^*\mid uv\in L\},
\qquad
u\rda L
=
u\backslash L
=
\{v\in\Sigma^*\mid uv\in L\}.
\]
Choosing all languages gives the language quantale
$\mathsf P(\Sigma^*)$. Fixing a surjective monoid homomorphism
$h\colon\Sigma^*\to M$ instead and using only inverse images
of subsets of $M$ gives Example~\ref{example-semigroup-quotient-tests}, with
closure $A\mapsto h^{-1}(h[A])$ and concept quantale $\mathsf P M$. By
contrast, when $\Sigma$ is finite, allowing all regular languages still
gives the identity closure: for $w\notin A$, the regular language
$\Sigma^*\setminus\{w\}$ contains $A$ but not $w$. The same quotient
formulas apply to a semigroup of program traces, as in trace semantics
\cite{Hoare1985}, where $L/v$ describes the prefixes that satisfy $L$ after
the continuation $v$.
\end{rem}

\begin{exmp}[Ideal and radical-ideal attributes]
\label{example-ideals-radical-ideals}
Let $A$ be a commutative ring, regarded as a multiplicative semigroup.
First consider the membership relation
\[
R_{\mathrm{Idl}}\colon A\rto\operatorname{Idl}(A),
\qquad
aR_{\mathrm{Idl}}I
\iff
a\in I,
\]
where the attribute set is the set of ideals. For
$I\in\operatorname{Idl}(A)$ and $b\in A$, both residual attributes are the
\emph{colon ideal of $I$ by $b$} \cite{Atiyah1969},
\[
I\lda b
=
(I:b)
=
\{a\in A\mid ab\in I\},
\qquad
b\rda I=(I:b).
\]
Thus $R_{\mathrm{Idl}}$ is residuated. Its induced closure sends
$S\subseteq A$ to the ideal generated by $S$:
\[
\dR_{\mathrm{Idl}}\uR_{\mathrm{Idl}}(S)=(S).
\]
Consequently,
\[
\sM R_{\mathrm{Idl}}\cong\operatorname{Idl}(A)
\]
as quantales, with multiplication given by ideal product.

Now restrict the attributes to radical ideals. If $I$ is radical, then
$(I:b)$ is radical. Indeed, if $c^n\in(I:b)$, then $c^n b\in I$, and
\[
(cb)^n=(c^n b)b^{n-1}\in I.
\]
It follows that $cb\in I$, and hence $c\in(I:b)$. Therefore the same colon
ideals give residual attributes for
\[
R_{\mathrm{rad}}\colon A\rto\operatorname{RIdl}(A),
\qquad
aR_{\mathrm{rad}}I
\iff
a\in I.
\]
Its closure is
\[
\dR_{\mathrm{rad}}\uR_{\mathrm{rad}}(S)=\sqrt{(S)},
\]
and hence
\[
\sM R_{\mathrm{rad}}\cong\operatorname{RIdl}(A)
\]
as quantales, with multiplication $I\otimes J=\sqrt{IJ}$.
\end{exmp}

\begin{rem}[Affine-variety form]
\label{example-ideals-radical-ideals-varieties}
Let $A=K[x_1,\ldots,x_n]$, where $K$ is algebraically closed, and take the
affine algebraic subsets of $K^n$ as attributes. Define
\[
fR_{\mathrm{Zar}}V
\iff
f|_V=0.
\]
For $g\in A$, put
\[
V\lda g
=
V(I(V):g),
\qquad
g\rda V
=
V(I(V):g).
\]
The ideal $(I(V):g)$ is radical by the preceding example. The Hilbert
Nullstellensatz \cite{Hartshorne1977} therefore gives
\[
fR_{\mathrm{Zar}}(V\lda g)
\iff
f\in(I(V):g)
\iff
fg\in I(V)
\iff
(fg)R_{\mathrm{Zar}}V,
\]
and similarly for the other residual. The induced closure is
$S\mapsto\sqrt{(S)}$. Thus this relation presents the radical-ideal quantale,
or equivalently the lattice of affine algebraic sets in the opposite order,
with multiplication corresponding to union of algebraic sets.
\end{rem}

\begin{exmp}[Closed-set attributes in a topological semigroup]
\label{example-closed-set-tests}
Let $X$ be a \emph{topological semigroup} \cite{Carruth1983}, and let $\CX$ be the set of closed
subsets of $X$. The membership relation
\[
R_{\mathrm{cl}}\colon X\rto\CX,
\qquad
xR_{\mathrm{cl}}F
\iff
x\in F
\]
is residuated. Indeed, for $F\in\CX$ and $a,b\in X$, the residual attributes
\[
F\lda b
=
\{a\in X\mid ab\in F\},
\qquad
a\rda F
=
\{b\in X\mid ab\in F\}
\]
are closed, since they are inverse images of $F$ under continuous right and
left translations. Moreover,
\[
\dR_{\mathrm{cl}}\uR_{\mathrm{cl}}(A)=\overline A
\]
for all $A\subseteq X$. Consequently,
\[
\sM R_{\mathrm{cl}}\cong\CX
\]
as quantales, with multiplication
\[
F\otimes G=\overline{F*G}.
\]
Thus ordinary topological closure is obtained directly as the Galois closure
of a natural residuated membership relation.
\end{exmp}

\begin{exmp}[Closed linear subspaces of operator algebras]
\label{example-operator-algebras}
Let $A$ be a $C^*$-algebra, regarded as a multiplicative semigroup, and let
$\operatorname{Max}A$ be the set of norm-closed complex linear subspaces of
$A$ \cite{Kruml2004}. Define
\[
R_{\operatorname{Max}}\colon A\rto\operatorname{Max}A,
\qquad
aR_{\operatorname{Max}}M
\iff
a\in M.
\]
For $N\in\operatorname{Max}A$ and $a,b\in A$, put
\[
N\lda b
=
\{a\in A\mid ab\in N\},
\qquad
a\rda N
=
\{b\in A\mid ab\in N\}.
\]
These are norm-closed complex linear subspaces, being inverse images of $N$
under bounded linear maps. Hence $R_{\operatorname{Max}}$ is residuated, and
its Galois closure is
\[
\dR_{\operatorname{Max}}\uR_{\operatorname{Max}}(S)
=
\overline{\operatorname{span}(S)}.
\]
It follows that
\[
\sM R_{\operatorname{Max}}\cong\operatorname{Max}A
\]
as quantales, with multiplication
\[
M\otimes N
=
\overline{\operatorname{span}(\{mn\mid m\in M,\ n\in N\})}.
\]

The same argument applies to a von Neumann algebra with weak-* closed linear
subspaces as attributes \cite{Egger2010}; see \cite{Pelletier1997a} for the
weak-operator version. Since multiplication by a
fixed element is weak-*
continuous, the residual attributes are weak-* closed, and the induced
closure is weak-* closed linear span. If $A$ is a commutative $C^*$-algebra,
one may instead use norm-closed ideals as attributes; the resulting concept
quantale is the quantale of closed ideals, with multiplication
$I\otimes J=\overline{IJ}$. For a general noncommutative $C^*$-algebra,
closed two-sided ideals need not be stable under both point residuals.
\end{exmp}

\subsection{Observation-induced relations}

\begin{exmp}[Quantale-valued observations]
\label{example-quantale-valued-observations}
Let $X$ be a semigroup, let $Q$ be a quantale, and let
\[
\Phi\colon X\to Q
\]
be a semigroup homomorphism. For the interpretation of quantales as
algebras of observations, see \cite{Resende2000}. Define
\[
R_{\Phi}\colon X\rto Q,
\qquad
xR_{\Phi}q
\iff
\Phi(x)\leq q.
\]
The quantale residuals give
\[
q\lda x'=q\lda\Phi(x'),
\qquad
x\rda q=\Phi(x)\rda q,
\]
and hence $R_{\Phi}$ is residuated. For every $A\subseteq X$,
\[
\dR_{\Phi}\uR_{\Phi}(A)
=
\left\{
x\in X
\mathrel{\Big|}
\Phi(x)\leq\bv_{a\in A}\Phi(a)
\right\}.
\]
Consequently, $\sM R_{\Phi}$ is isomorphic to the subquantale
\[
Q_{\Phi}
=
\left\{
\bv_{a\in A}\Phi(a)
\mathrel{\Big|}
A\subseteq X
\right\}
\]
of $Q$. Thus every multiplicative representation of a semigroup in a
quantale canonically yields a residuated relation.
\end{exmp}

\begin{rem}[Relational semantics and group actions]
\label{example-programs-relational-specifications}
\label{example-group-actions-relational-tests}
Let $S$ be a set and take $Q=\mathsf P(S\times S)$ with relational
composition in the convention
\[
P\circ Q
=
\{(s,t)\mid \text{there exists }u\in S\text{ with }(s,u)\in P
\text{ and }(u,t)\in Q\}.
\]
This is the standard quantale of binary relations; see \cite{Freyd1990}
for the relational background. A semigroup homomorphism
\[
\Phi\colon X\to\mathsf P(S\times S)
\]
may be viewed as a compositional relational semantics for the elements of
$X$ \cite{Abramsky1993}. The preceding relation becomes
\[
xR_{\Phi}T
\iff
\Phi(x)\subseteq T,
\]
and its concept quantale is the subquantale generated by the relational
behaviours $\Phi(x)$. In particular, a right action of a group $G$ on $S$
gives such a homomorphism by sending $g$ to the graph of
$s\mapsto s\cdot g$.
\end{rem}

\begin{rem}[Closed set-valued observations]
\label{example-set-valued-closed-tests}
Let $X$ be a semigroup, let $Y$ be a topological semigroup, and let
$\Phi\colon X\to\mathsf P Y$ be a semigroup homomorphism. For general
background on set-valued maps, see \cite{Aubin1990}. Let $\CY$ be the set of
closed subsets of $Y$, and define
\[
R_{\Phi}^{\mathrm{cl}}\colon X\rto\CY,
\qquad
xR_{\Phi}^{\mathrm{cl}}C
\iff
\Phi(x)\subseteq C
\]
for $x\in X$ and $C\in\CY$. For $C\in\CY$ and $x,x'\in X$, put
\[
C\lda x'
=
\{y\in Y\mid y\Phi(x')\subseteq C\},
\qquad
x\rda C
=
\{y\in Y\mid\Phi(x)y\subseteq C\}.
\]
These two residual attributes are closed, being intersections of inverse
images of $C$ under continuous right and left translations, respectively.
Since $\Phi(xx')=\Phi(x)*\Phi(x')$, they satisfy
\[
(xx')R_{\Phi}^{\mathrm{cl}}C
\iff
xR_{\Phi}^{\mathrm{cl}}(C\lda x')
\iff
x'R_{\Phi}^{\mathrm{cl}}(x\rda C).
\]
Thus $R_{\Phi}^{\mathrm{cl}}$ is residuated. Writing
$\Phi[A]=\bigcup_{a\in A}\Phi(a)$, its Galois closure is
\[
\sj_{R_{\Phi}^{\mathrm{cl}}}(A)
=
\{x\in X\mid\Phi(x)\subseteq\overline{\Phi[A]}\}.
\]
Applying $\sj_{R_{\Phi}^{\mathrm{cl}}}$ to $A$ does not change
$\overline{\Phi[A]}$. Hence the map
\[
\Fix\bigl(\sj_{R_{\Phi}^{\mathrm{cl}}}\bigr)\longrightarrow\CY,
\qquad
A\longmapsto\overline{\Phi[A]},
\]
identifies the fixed-point quantale with the subquantale
\[
\{\overline{\Phi[A]}\mid A\subseteq X\}\subseteq\CY,
\]
whose suprema are closures of unions and whose multiplication is
$F\otimes G=\overline{F*G}$.
\end{rem}

\subsection{Multiplicative invariants}

\begin{exmp}[Group-valued multiplicative invariants]
\label{example-multiplicative-invariants}
Let $X$ be a semigroup, let $G$ be a group, let
$h\colon X\to G$ be a semigroup homomorphism, and fix $q\in G$. Define
\[
R_q\colon X\rto G,
\qquad
xR_qg
\iff
h(x)g=q.
\]
For $g\in G$ and $x,x'\in X$, put
\[
g\lda x'=h(x')g,
\qquad
x\rda g=gq^{-1}h(x)q.
\]
A direct calculation gives
\[
(xx')R_qg
\iff
xR_q(g\lda x')
\iff
x'R_q(x\rda g),
\]
so $R_q$ is residuated. If $G$ is commutative, the second formula simplifies
to $x\rda g=gh(x)$.

For $a\in h[X]$, let $F_a=h^{-1}(a)$. Then
\[
\uR_q(F_a)=\{a^{-1}q\},
\qquad
\dR_q\uR_q(F_a)=F_a,
\]
so every nonempty fibre of $h$ is an extent. A subset meeting two distinct
fibres has no common attribute and therefore closes to $X$. Moreover, for
$a,b\in h[X]$,
\[
F_a\otimes_{R_q}F_b=F_{ab}.
\]
Thus the concept quantale retains the multiplication of the invariant values
while identifying elements in the same fibre.
\end{exmp}

\begin{exmp}[Determinant tests and singular collapse]
\label{example-matrix-determinant-context}
Let $K$ be a field, let $n\geq 1$, regard $M_n(K)$ as a multiplicative
semigroup, and take $K$ as the attribute set. Fix
$q\in K^*=K\setminus\{0\}$, and define
\[
\phi_q\colon M_n(K)\rto K,
\qquad
A\phi_q\lambda
\iff
\det(A)\lambda=q.
\]
On $GL_n(K)$, this is the preceding construction for the group-valued
invariant $\det\colon GL_n(K)\to K^*$. The present context also includes
singular matrices and the zero scalar.

For $A,A'\in M_n(K)$ and $\lambda\in K$, put
\[
\lambda\lda A'=\det(A')\lambda,
\qquad
A\rda\lambda=\lambda\det(A).
\]
Since $K$ is commutative and the determinant is multiplicative
\cite{Artin2010},
\[
(AA')\phi_q\lambda
\iff
A\phi_q(\lambda\lda A')
\iff
A'\phi_q(A\rda\lambda).
\]
Hence $\phi_q$ is a residuated relation.

For $\mu\in K$, write
\[
D_\mu=\{A\in M_n(K)\mid\det(A)=\mu\}.
\]
If $\mu\in K^*$, then
\[
\uphi_q(D_\mu)=\{q\mu^{-1}\}
\qquad\text{and}\qquad
\dphi_q\uphi_q(D_\mu)=D_\mu.
\]
On the other hand,
\[
\uphi_q(D_0)=\varnothing
\qquad\text{and}\qquad
\dphi_q\uphi_q(D_0)=M_n(K),
\]
because no scalar satisfies $0\lambda=q$. More generally, for every
$S\subseteq M_n(K)$,
\[
\dphi_q\uphi_q(S)
=
\begin{cases}
\varnothing
& \text{if }S=\varnothing,\\
D_\mu
& \text{if }S\neq\varnothing\text{ and }\det(A)=\mu\in K^*
  \text{ for all }A\in S,\\
M_n(K)
& \text{otherwise}.
\end{cases}
\]
Indeed, a nonempty family of matrices has a common scalar attribute precisely
when all its members have the same nonzero determinant. Consequently, the
extents of $\phi_q$ are exactly
\[
\varnothing,
\qquad
D_\mu\quad(\mu\in K^*),
\qquad
M_n(K).
\]

For $\mu,\nu\in K^*$, one has
\[
D_\mu*D_\nu=D_{\mu\nu}.
\]
The inclusion $D_\mu*D_\nu\subseteq D_{\mu\nu}$ follows from the
multiplicativity of the determinant. Conversely, for $C\in D_{\mu\nu}$,
choose $A\in D_\mu$ and put $B=A^{-1}C$; then $B\in D_\nu$ and $C=AB$.
Therefore
\[
D_\mu\otimes_{\phi_q}D_\nu=D_{\mu\nu}.
\]
Moreover, $\varnothing$ is multiplicatively absorbing, while
\[
M_n(K)\otimes_{\phi_q}E
=
E\otimes_{\phi_q}M_n(K)
=
M_n(K)
\]
for every nonempty extent $E$. Thus the nonzero determinant fibres reproduce
the group multiplication of $K^*$, whereas every singular matrix is
indistinguishable from the top extent under the chosen nonzero scalar
attributes.
\end{exmp}

\end{appendices}

\section*{Acknowledgements}

The first named author acknowledges support from the National Natural Science Foundation of China (Grant No. 12671561). The authors would like to thank Professor Hongliang Lai for helpful discussions.

\section*{Declaration of generative AI and AI-assisted technologies
in the manuscript preparation process}

During the preparation of this work, the authors used Codex in
ChatGPT Work (OpenAI) to assist with manuscript organization,
language editing, consistency checking, and submission-readiness
review. After using this tool, the authors reviewed and edited the
content as needed and take full responsibility for the content of
the published article.



\bibliographystyle{abbrv}
\bibliography{lili}

\end{document}